\def \C {\mathbb{C}}
\def \fol {{\mathcal F}}
\def \F {{\mathcal F}}
\def \nn {\mathcal{N}}
\def \P {\mathbb{P}}
\def \pn {\mathbb{P}^n}
\def \sing {{\rm Sing}}
\def \singf {{\rm Sing}(\mathcal{F})}
\newtheorem{maintheorem}{Theorem}
\newtheorem{secondtheorem}{Theorem}
\newtheorem{proposition}{Proposition}[section]
\newtheorem{conjecture}[proposition]{Conjecture}
\newtheorem{corollary}[proposition]{Corollary}
\newtheorem{theorem}[proposition]{Theorem}
\begin{document}

\title[Absolutely $k$-convex domains and holomorphic foliations]{Absolutely $k$-convex domains and holomorphic foliations on homogeneous  manifolds}

\author{Maur\'icio  Corr\^ea Jr. }
\thanks{The first author is partially supported by CNPq grant number 300352/2012-3 and FAPEMIG grant number PPM-00169-13.
 The second author is partially supported by IMPA}
\address{\noindent Maur\' \i cio Corr\^ea Jr\\
Departamento de Matem\'atica \\
Universidade Federal de Minas Gerais\\
Av. Ant\^onio Carlos 6627 \\
30123-970 Belo Horizonte MG, Brazil}
\email{mauricio@mat.ufmg.br}

\author{Arturo Fern\'andez-P\'erez }

\address{Arturo Fern\'andez P\'erez \\
Departamento de Matem\'atica \\
Universidade Federal de Minas Gerais\\
Av. Ant\^onio Carlos 6627 \\
30123-970 Belo Horizonte MG, Brazil} \email{arturofp@mat.ufmg.br}

\subjclass[2010]{Primary 32S65}
\keywords{ Holomorphic foliations - Absolutely $q$-convex spaces}

\begin{abstract}
We consider a holomorphic foliation $\fol$ of codimension $k\geq 1$ on a homogeneous 
compact K\"ahler manifold $X$ of dimension $n>k$. Assuming that the singular set $\sing(\fol)$ of $\fol$ is 
contained in an absolutely $k$-convex domain $U\subset X$,  we prove that the determinant of normal bundle 
$\det(N_{\F})$ of $\fol$ cannot be an ample line bundle, provided $[n/k]\geq 2k+3$. Here $[n/k]$ denotes the largest integer $\leq n/k.$
\end{abstract}
\maketitle
\section{Introduction}

\par Motived by the recently results of J.E. Forn\ae ss, N. Sibony, E.F. Wold \cite{FSE}, we study properties on 
absolutely $k$-convex spaces and holomorphic foliations of arbitrary codimension on homogeneous compact K\"ahler manifolds. The existence of these domains 
in the foliated manifold implies properties of positive for the normal bundle of the foliation involved. 
More precisely, using 
Ohsawa-Takegoshi-Demailly's decomposition \cite{De} for absolutely
$q$-convex spaces
and the residual formulas of Baum-Bott type \cite{baum}, we prove the following
result.
\begin{maintheorem}\label{main_theorem}
Let $\fol$ be a holomorphic foliation, of codimension $k\geq 1$, on a
homogeneous compact K\"ahler manifold $X$ of dimension $n>k$. Suppose
that $Sing(\fol)$  is contained in an absolutely
$k$-convex domain $U\subset X$ and such that $$[n/k]\geq 2k+3.$$
Then, the determinant of normal bundle 
$\det(N_{\F})$ of $\fol$ cannot be an ample line bundle.
\end{maintheorem}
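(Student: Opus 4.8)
The plan is to argue by contradiction: assume that $L:=\det(N_{\fol})$ is ample, so that its top self-intersection satisfies $\int_X c_1(L)^n>0$. First I would exploit the foliation structure over the regular locus. Writing the defining sequence $0\to T_{\fol}\to TX\to N_{\fol}\to 0$ with $N_{\fol}$ of rank $k$, the normal bundle carries a Bott partial connection over $X\setminus\singf$; by Bott's vanishing this produces, for every weighted-homogeneous polynomial of degree $>k$ in the Chern classes of $N_{\fol}$, a characteristic representative that is identically zero off $\singf$. Since $n>k$, I would apply this to $c_1(N_{\fol})^{k+1}$ (weighted degree $k+1>k$): it is represented by a closed $(k+1,k+1)$-current $\theta$ supported in an arbitrarily small neighbourhood of $\singf$, hence supported in $U$. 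This is precisely the localization underlying the Baum--Bott residues of \cite{baum}. Because $X$ is compact K\"ahler and $L$ is assumed ample, $X$ is projective, which supplies the Hodge-theoretic tools (K\"ahler identities, Serre duality, $L^2$-theory) used below; note $c_1(L)=c_1(N_{\fol})$.

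The core of the argument is to pair $\theta$ against a power of the ample class and show that the pairing must vanish. Taking $\beta$ to be a $d$-closed representative of $c_1(L)^{\,n-k-1}$ of type $(n-k-1,n-k-1)$, one has
\[
\int_X c_1(L)^n=\int_X c_1(N_{\fol})^{k+1}\wedge c_1(L)^{\,n-k-1}=\int_X \theta\wedge\beta=\int_U\theta\wedge\beta,
\]
the last equality because $\theta$ is supported in $U$ and $\singf$ is compact in $U$. Here I would invoke the Ohsawa--Takegoshi--Demailly decomposition of \cite{De} on the absolutely $k$-convex domain $U$: in the relevant range of bidegrees this decomposition, together with Serre duality on $U$, forces the compactly supported Dolbeault cohomology of $U$ in bidegree $(k+1,k+1)$ to vanish, so that $\theta=\bar\partial\Psi$ for a compactly supported current $\Psi$ on $U$. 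Integrating by parts then gives $\int_U\theta\wedge\beta=\pm\int_U\Psi\wedge\bar\partial\beta=0$, since $\beta$ is $\bar\partial$-closed and $\Psi$ has no boundary contribution. Combined with the display above this yields $\int_X c_1(L)^n=0$, contradicting ampleness.

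The hard part is the middle step, namely extracting from the absolute $k$-convexity of $U$ the exactness $\theta=\bar\partial\Psi$ with a compactly supported primitive. This is where the numerical hypothesis is consumed: one must verify that the dual degree $n-k-1$ lies inside the range in which Demailly's decomposition yields vanishing for an absolutely $k$-convex domain, and it is exactly the inequality $[n/k]\geq 2k+3$ that places $n-k-1$ in that range. Making the integration by parts rigorous on the non-compact $U$ also requires the $L^2$-estimates built into the Ohsawa--Takegoshi--Demailly framework, in order to control the primitive $\Psi$ and to annihilate the boundary terms near $\partial U$. Securing these estimates and pinning down the precise threshold that produces the bound $2k+3$ is the main obstacle I anticipate.
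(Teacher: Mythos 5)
Your overall strategy is the same as the paper's: localize $c_1(N_{\fol})^{k+1}$ near $\singf$ by Bott vanishing (this is the Baum--Bott residue), observe that the localized class then lives on the absolutely $k$-convex domain $U$, and kill it there using cohomological consequences of the convexity. Your pairing against $c_1(L)^{n-k-1}$ and the reduction to vanishing of $H^{2k+2}_c(U,\C)$ is just the Poincar\'e--Serre dual of the paper's statement that $H^{2(n-k)-2}(U,\C)=0$, so up to duality the outline matches. However, the decisive step --- the vanishing itself --- is exactly the step you leave open (``the main obstacle I anticipate''), and the mechanism you sketch for it is not sufficient. The Ohsawa--Takegoshi--Demailly decomposition on an absolutely $q$-convex K\"ahler manifold only gives a Hodge decomposition $H^{m}(U,\C)\simeq\bigoplus_{s+\ell=m}H^{s}(U,\Omega^{\ell})$ with Hodge symmetry in degrees $m\geq n+q$; it produces no vanishing by itself, and neither Serre duality nor $L^2$ estimates will extract one from it. The vanishing of the individual Dolbeault groups must come from the Andreotti--Grauert theorem, which requires $U$ to be $\tilde{q}$-\emph{complete} (not merely $q$-convex) for a suitable $\tilde{q}$.

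This is where the hypothesis you never use --- homogeneity of $X$ --- is essential. In the paper, Peternell's theorem (valid only for homogeneous compact manifolds) upgrades $k$-convexity with corners of $U\subsetneq X$ to $k$-completeness with corners, and the Diederich--Forn\ae ss smoothing theorem then yields genuine $\tilde{k}$-completeness with $\tilde{k}=n-[n/k]+1$. Andreotti--Grauert gives $H^{j}(U,\Omega^{\ell})=0$ for $j\geq\tilde{k}$, and the arithmetic condition $[n/k]\geq 2k+3$ is precisely what guarantees both that $2(n-k)-2\geq n+k$ (so the OTD decomposition applies in that degree) and that in every summand $H^{s}(U,\Omega^{\ell})$ with $s+\ell=2(n-k)-2$ one has $s\geq\tilde{k}$ or $\ell\geq\tilde{k}$, so that each summand dies either directly or via Hodge symmetry. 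Without this chain (homogeneity $\Rightarrow$ Peternell $\Rightarrow$ Diederich--Forn\ae ss $\Rightarrow$ Andreotti--Grauert) your compactly supported primitive $\Psi$ does not exist, and the proof does not close. A secondary, more minor point: your contradiction via $\int_X c_1(L)^n>0$ works, but the paper also needs (and proves, as its Theorem 2) that $\sing_{k+1}(\fol)\neq\emptyset$ when $\det(N_{\fol})$ is ample; in your formulation this is absorbed into the statement that $\theta$ can be supported near $\singf$, which silently uses codimension estimates on $\singf$ that deserve justification.
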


\par Recently smooth foliations on homogeneous compact K\"ahler manifolds have been investigated by Lo Bianco-Pereira \cite{JV}. It follows from Theorem \ref{main_theorem} that a foliation $\F$ of codimension $k\geq 1$ on a homogeneous compact K\"ahler manifold $X$ of dimension $n>k$, with
$\det(N_{\F})$ ample and $[n/k]\geq 2k+3$ is either regular or  the singular set $Sing(\fol)$  cannot be  contained in an absolutely $k$-convex domain $U\subset X$. 
\par On the other hand, related papers about codimension-one 
holomorphic foliations with ample normal bundle on compact K\"ahler
manifolds of dimension at least three have been studied by Brunella in \cite{brunella}, \cite{tori} and
Brunella-Perrone in \cite{perrone}. Furthermore, Brunella stated in \cite{brunella} the following conjecture:
\begin{conjecture}\label{conje}
Let $X$ be a compact connected complex manifold of dimension $n\geq 3$, and let $\fol$
be a codimension-one holomorphic foliation on $X$ whose normal
bundle $N_{\fol}$ is ample. Then every leaf of $\fol$
accumulates to $\singf$.
\end{conjecture}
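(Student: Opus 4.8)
\emph{The plan} is to argue by contradiction and convert the statement into a positivity test for $N_{\fol}$ against a current produced by the putative non-accumulating leaf. Suppose some leaf $L$ does not accumulate to $\singf$. Then $E=\overline{L}$ is a nonempty compact set, saturated by $\fol$ and disjoint from $\singf$, hence contained in the regular locus $X_{0}=X\setminus\singf$, where $\fol$ is a genuine nonsingular codimension-one foliation. By Zorn's lemma $E$ contains a minimal closed invariant set $M\subset X_{0}$, and on a neighborhood of $M$ the normal bundle $N_{\fol}$ carries the Bott partial connection, which is flat along the leaves. In particular $N_{\fol}$ restricts to a flat line bundle on each leaf, so that $c_{1}(N_{\fol})$, and therefore every power $c_{1}(N_{\fol})^{n-1}$, vanishes when restricted to the leaves through $M$.

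Next I would extract from the $(n-1)$-dimensional leaves accumulating on $M$ a nonzero positive current $T$ of bidimension $(n-1,n-1)$, directed by $\fol$ and supported on $M$: concretely an Ahlfors--Nevanlinna current obtained by averaging the currents of integration over exhausting pieces of the leaf, or equivalently a Garnett-type foliated harmonic current on the minimal set $M$. Writing $\Theta>0$ for the curvature of a positive metric on the ample bundle $N_{\fol}$, I would then evaluate $\int_{X}T\wedge\Theta^{\,n-1}$ in two ways. Ampleness makes $\Theta^{\,n-1}$ a strictly positive $(n-1,n-1)$-form, so pairing it with the nonzero positive current $T$ gives $\int_{X}T\wedge\Theta^{\,n-1}>0$. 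On the other hand, since $c_{1}(N_{\fol})^{n-1}$ vanishes in the leaf directions, $\Theta^{\,n-1}$ is leafwise $d$-exact along the leaves of $M$; the Ahlfors length--area inequality then forces the boundary contributions in the averaging to be negligible, giving $\int_{X}T\wedge\Theta^{\,n-1}=0$. These two evaluations are incompatible, so no such leaf $L$ can exist and every leaf must accumulate to $\singf$.

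The step I expect to be the genuine obstacle is the construction and control of $T$ in the required generality. One must guarantee that $T$ is nonzero and, crucially, $d$-closed rather than merely $dd^{c}$-closed, and that the exact boundary term killed by the length--area estimate really is $d$-exact leafwise and integrates against $T$ without defect. On a K\"ahler or projective $X$ these points are handled by the $\partial\bar\partial$-lemma and the well-behaved calculus of closed positive currents, which is exactly how the known cases (Brunella, Brunella--Perrone) are settled. For an arbitrary compact connected complex manifold of dimension $n\geq 3$ one loses the $dd^{c}$-lemma and the cohomological interpretation of $\int_{X}T\wedge\Theta^{\,n-1}$, and it is precisely this failure that keeps the conjecture open in full generality.

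Finally, I would record the case covered by the present paper. When $X$ is homogeneous K\"ahler and $n=[n/1]\geq 5$, the compact invariant set $E=\overline{L}$ serves as a pseudoconvex barrier and, following Forn\ae ss--Sibony--Wold, allows one to enclose $\singf$ in an absolutely $1$-convex domain $U\subset X$. Theorem \ref{main_theorem} with $k=1$ then asserts that $\det(N_{\fol})=N_{\fol}$ cannot be ample, contradicting the hypothesis. Hence in this regime every leaf of $\fol$ accumulates to $\singf$, which is Conjecture \ref{conje} under the hypotheses of Theorem \ref{main_theorem}; the general statement remains governed by the current-theoretic obstruction described above.
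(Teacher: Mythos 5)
This statement is Brunella's conjecture (Conjecture \ref{conje}); the paper does not prove it and does not claim to. It is recorded as an open problem, with only special cases known: Brunella--Perrone for projective manifolds with cyclic Picard group, and Lins Neto for $\mathbb{P}^n$, $n\geq 3$. So there is no ``paper proof'' to compare against, and your proposal must be judged as a standalone argument --- and as such it is not a proof. You say so yourself in the third paragraph, but it is worth pinning down exactly where the argument breaks. The cohomological evaluation $\int_X T\wedge\Theta^{n-1}=0$ requires $T$ to be a genuine foliated cycle: a \emph{closed} positive current directed by $\fol$. Only then does leafwise exactness of $\Theta^{n-1}$ (via Bott vanishing and the fact that a directed current kills forms vanishing in the leaf directions) give $\int_X T\wedge\Theta^{n-1}=\pm\int_X dT\wedge\beta=0$. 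But the currents you can actually produce on an arbitrary minimal set are Garnett-type harmonic currents, which are only $\partial\bar\partial$-closed; Ahlfors--Nevanlinna currents are closed only under growth hypotheses on the leaves (parabolicity, subexponential area growth) that you have no way to verify here. Without closedness the pairing is not cohomological and the vanishing evaluation collapses. This is not a technical refinement to be added later; it is the entire difficulty, and it is why the conjecture is open.

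Your final paragraph also overstates what the present paper yields. Theorem \ref{main_theorem} takes as a \emph{hypothesis} that $\singf$ is contained in an absolutely $k$-convex domain $U\subset X$; it does not derive such a domain from the existence of a non-accumulating leaf. Your claim that the closure $E=\overline{L}$ ``serves as a pseudoconvex barrier'' and that Forn\ae ss--Sibony--Wold then encloses $\singf$ in an absolutely $1$-convex domain is precisely the missing implication: nothing in \cite{FSE} or in this paper shows that the complement of a compact invariant set (or any neighborhood of $\singf$ disjoint from it) is absolutely $1$-convex. The introduction of the paper only \emph{suggests} that accumulation of leaves should be related to the existence of such convex domains; it does not prove that relation, and neither do you. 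Consequently the conjecture is not settled by your argument even in the homogeneous K\"ahler case with $n\geq 5$.
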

\par In \cite{perrone}, Brunella-Perrone
proved the Conjecture \ref{conje} for codimension-one holomorphic
foliations on projective manifolds with cyclic Picard group. In the case of $X=\mathbb{P}^n$, $n\geq 3$, the above 
conjecture was proved by Lins Neto \cite{Ln}. We remark that the conjecture \ref{conje} 
can be enunciated in a high codimensional version.
\begin{conjecture}[Generalized Brunella's conjecture]\label{conje1}
Let $X$ be a compact connected complex manifold of dimension $n\geq
3$, and let $\fol$ be a holomorphic foliation of codimension $k<n$ on $X$ whose normal bundle $N_{\fol}$ is ample. Then every
leaf of $\fol$ accumulates to $\singf$, provided $n\geq 2k+1$.
\end{conjecture}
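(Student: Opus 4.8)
The plan is to run the contrapositive, in parallel with the proof of Theorem~\ref{main_theorem}. Suppose the conclusion fails, so that some leaf $L$ of $\fol$ does not accumulate to $\singf$; then its closure $\overline{L}$ is a compact $\fol$-invariant set with $\overline{L}\cap\singf=\emptyset$, and $\singf$ lies in the open set $X\setminus\overline{L}$. The first step is to promote this separation to convexity: starting from a Hermitian metric on $N_{\fol}$ of positive curvature (available because $N_{\fol}$ is ample) I would try to manufacture, near $\overline{L}$, a smooth exhaustion of $X\setminus\overline{L}$ whose Levi form is strictly positive in the $k$ transverse directions of $\fol$ and unconstrained in the $n-k$ tangential ones. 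That signature is exactly absolute $k$-convexity, so the outcome of this step should be an absolutely $k$-convex domain $U$ with $\singf\subset U$.

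With such a $U$ in hand, the second step is to feed it into the same analytic machinery used for Theorem~\ref{main_theorem}: the Ohsawa--Takegoshi--Demailly decomposition \cite{De} splits the Dolbeault groups attached to $U$, and the Baum--Bott residues \cite{baum} of $\det(N_{\fol})$ can then be localized on $\singf\subset U$. Under the numerology $n\geq 2k+1$ this localization should force the relevant characteristic number of $\det(N_{\fol})$ to vanish, contradicting the ampleness of $\det(N_{\fol})$, which follows from that of $N_{\fol}$. The ampleness hypothesis is thus used twice: once to build the convex exhaustion, and once as the positivity that the residue computation is meant to rule out.

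The main obstacle is to carry the first step out on an \emph{arbitrary} compact complex manifold. In the homogeneous K\"ahler setting of Theorem~\ref{main_theorem} one can exploit the transitive group action to produce metrics and exhaustions with uniform control of the Levi form; without homogeneity one must build the transverse strictly plurisubharmonic exhaustion near $\overline{L}$ by hand, and $\overline{L}$ need not be smooth, so the construction has to be patched across a possibly singular invariant set. A second, more quantitative gap is the dimension bound: Theorem~\ref{main_theorem} requires $[n/k]\geq 2k+3$, whereas the conjecture asks only for $n\geq 2k+1$, so even after the domain is constructed one still has to sharpen the residual estimates to cover the missing range. Closing these two gaps---removing the homogeneity hypothesis and lowering the dimension bound---is precisely what separates the theorem proved here from the general Conjecture~\ref{conje1}.
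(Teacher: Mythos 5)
This statement is a \emph{conjecture} in the paper: the authors explicitly leave it open, and no proof of it appears anywhere in the text. What you have written is a strategy outline rather than a proof, and you candidly name its two unfilled gaps yourself; those gaps are genuine and are exactly the distance between Theorem~1 and Conjecture~\ref{conje1}, so the proposal cannot be accepted as a proof of the statement.

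Concretely, the first step is the serious one. Even in the paper's own Theorem~\ref{main_theorem}, the existence of an absolutely $k$-convex domain $U$ containing $\singf$ is a \emph{hypothesis}, not something derived from the ampleness of $N_{\fol}$ together with the existence of a leaf whose closure avoids $\singf$. Nowhere in the paper --- even under homogeneity and the K\"ahler assumption --- is the implication ``some leaf does not accumulate to $\singf$ $\Rightarrow$ $\singf$ lies in an absolutely $k$-convex domain'' established; a construction of this kind is known essentially only in codimension one (Brunella's work cited in \cite{brunella}, \cite{perrone}), where the positivity of $N_{\fol}$ is used to build a plurisubharmonic exhaustion of the complement of a compact invariant set. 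Producing the transversely strictly plurisubharmonic exhaustion near a possibly singular invariant set $\overline{L}$ in arbitrary codimension is precisely the missing idea, not a routine patching argument. The second gap is also real and not merely quantitative in a fixable sense: the residue-vanishing mechanism of Theorem~\ref{domain-corner} genuinely needs $2(n-k)-2\geq n+\tilde{k}$ with $\tilde{k}=n-[n/k]+1$, which is where $[n/k]\geq 2k+3$ comes from; for $k=1$ this already demands $n\geq 5$, while the conjecture asserts the case $n=3$. So the machinery of the paper, even granting step one, does not reach the stated range, and ``sharpening the residual estimates'' would require a new idea rather than a refinement. In short: the approach is a reasonable heuristic for why the conjecture should hold, and it correctly mirrors the architecture of Theorem~\ref{main_theorem}, but both of the steps you flag are open, and the paper itself only offers Theorem~\ref{main_theorem} as evidence for, not a proof of, Conjecture~\ref{conje1}.
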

\par Our main result suggests that the property of accumulation of the leaves of $\fol$ to singular set of 
$\fol$ (\textit{or nonexistence of minimal sets of $\fol$} \cite{clp}) depends of the existence of strongly $q$-convex spaces contained singularities of $\fol$ on $X$.  

\par  Suppose that $\fol$ is a codimension-one foliation on $\pn$, $n\geq 3$.
Then its singular set $\singf$ always contains at least one irreducible
component of codimension $2$ (cf. \cite{Ln}). This fact is a
consequence of Baum-Bott formula and turns to be fundamental in the proof nonexistence of non-singular real-analytic 
Levi-flat hypersurfaces due to Lins Neto \cite{Ln}. In order to prove Theorem \ref{main_theorem}, we need prove an analogous result for holomorphic foliations of arbitrary dimension. Of course, we prove the following result, which is valid for foliations with determinant of normal bundle ample on compact complex manifolds.
\begin{secondtheorem}\label{singu}
Let $\mathcal{F}$ be a singular holomorphic foliation of
codimension $k\geq 1$ on a compact complex manifold $X$, such that $\mathrm{cod}
\singf\geq k+1$. If $\det(N_{\fol})$ is ample, then $\singf$ must
have at least one irreducible component of  codimension $k+1$.
\end{secondtheorem}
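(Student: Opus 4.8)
The plan is to apply the Baum-Bott residue theorem \cite{baum} to a well-chosen characteristic class of the normal bundle and then read off the dimension of the support of the resulting residues. Recall that a foliation $\fol$ of codimension $k$ has normal sheaf $N_{\fol}$ of rank $k$, and that to each symmetric homogeneous polynomial $\varphi$ of degree $d>k$ Baum-Bott associates residues localized on $\singf$ satisfying
$$\varphi(N_{\fol}) = \sum_{Z} (\iota_Z)_*\,\mathrm{Res}_\varphi(\fol, Z),$$
where $Z$ runs over the connected components of $\singf$, $\iota_Z\colon Z\hookrightarrow X$ is the inclusion, and $\mathrm{Res}_\varphi(\fol,Z) \in H_{2(n-d)}(Z;\C)$ under the Poincar\'e-duality identification $H^{2d}(X;\C)\cong H_{2(n-d)}(X;\C)$. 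I would take $\varphi = c_1^{\,k+1}$, homogeneous of degree $d=k+1>k$, so that $\varphi(N_{\fol}) = c_1(N_{\fol})^{k+1} = c_1(\det(N_{\fol}))^{k+1}$.

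The first step is to verify that the left-hand side is nonzero. Since $\det(N_{\fol})$ is ample, Kodaira's embedding theorem shows that $X$ is projective; moreover $\mathrm{cod}\,\singf\geq k+1$ forces $n\geq k+1$. Pairing then gives $\int_X c_1(\det(N_{\fol}))^{k+1}\wedge c_1(\det(N_{\fol}))^{n-k-1} = \int_X c_1(\det(N_{\fol}))^n > 0$, so $c_1(\det(N_{\fol}))^{k+1}\neq 0$ in $H^{2(k+1)}(X;\C)$. In particular the sum of residues above does not vanish, which already shows $\singf\neq\emptyset$.

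The decisive step is a dimension count on the supports. For a component $Z$ of $\singf$, the residue $\mathrm{Res}_\varphi(\fol,Z)$ lives in $H_{2(n-k-1)}(Z;\C)$, which vanishes whenever $2(n-k-1) > \dim_{\R} Z = 2\dim_{\C} Z$, i.e.\ whenever $\mathrm{cod}_{\C} Z > k+1$. By hypothesis every component satisfies $\mathrm{cod}\, Z \geq k+1$, so only the components of codimension exactly $k+1$ can carry a nonzero residue. Since the total class $c_1(\det(N_{\fol}))^{k+1}$ is nonzero, at least one such residue is nonzero, and therefore $\singf$ has at least one irreducible component of codimension $k+1$.

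The main obstacle I anticipate is making the localization formula and, above all, the vanishing $\mathrm{Res}_\varphi(\fol,Z)=0$ for $\mathrm{cod}\,Z>k+1$ fully rigorous: one must invoke the correct dimensional properties of the Baum-Bott localized classes (the residue on a component of complex dimension $m$ lands in $H_{\leq 2m}$ of that component) rather than treating the components naively as homology cycles. A secondary point to handle carefully is the reduction to the projective case and the nonvanishing of $c_1^{\,k+1}$ when $X$ is a priori only compact complex.
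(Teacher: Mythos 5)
Your proof is correct, but it takes a genuinely different route from the paper's. You invoke the full Baum-Bott residue theorem for the symmetric function $c_1^{k+1}$ of degree $d=k+1>k$, localized on the components of $\singf$, together with the dimensional fact that the residue on a component $Z$ lies in $H_{2(n-k-1)}(Z;\C)$ and therefore vanishes as soon as $\mathrm{cod}\,Z\geq k+2$. The paper argues more directly: setting $S=\singf$ and $U=X\setminus S$ and assuming $\dim_{\C}S\leq n-k-2$, it uses the long exact sequence of the pair $(X,U)$ together with Alexander duality, $H^{2k+1}(X,U;\C)\cong H_{2(n-k)-1}(S;\C)=0$, to show that the restriction map $H^{2k+2}(X;\C)\to H^{2k+2}(U;\C)$ is injective; Bott's vanishing theorem gives $c_1^{k+1}(N_{\F}|_{U})=0$, hence $c_1^{k+1}(N_{\F})=0$ on all of $X$, contradicting ampleness of $\det(N_{\F})$. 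Both arguments rest on the same two pillars, namely Bott vanishing in degree $2k+2$ off the singular set and the observation that a set of codimension at least $k+2$ is topologically too small to carry a class of that degree, but the paper's version avoids the residue formalism entirely (the Baum-Bott formula is only developed later, in Section 4, and only for $\sing_{k+1}(\fol)$, precisely because Theorem 2 is needed beforehand to guarantee that this locus is nonempty), whereas yours uses the general residue theorem as a black box and in exchange pinpoints where the nonvanishing is localized. Your verification that $c_1(\det(N_{\F}))^{k+1}\neq 0$ via $\int_X c_1(\det(N_{\F}))^{n}>0$ is sound and is exactly the nonvanishing the paper asserts from ampleness; the caveats you raise (dimensional properties of the localized classes, residues attached to connected rather than irreducible components) are real but are handled by the statement of Baum-Bott's theorem itself, so they do not constitute gaps.
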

\par The proof of Theorem \ref{singu} is inspired on Jouanolou's proof in \cite[Proposition 2.7, pg. 97]{J}. Jouanolou supposes that the conormal sheaf
$N_{\mathcal{F}}^*$ of $\fol$ is locally free and ample.
The condition that $N_{\mathcal{F}}^*$ to be  locally free imposes strong restrictions on the singular set of the foliation $\F$, since in this case  $\F$
is given by a locally decomposable holomorphic twisted holomorphic form along to singular set of $\F$.
We will show that these hypotheses are not necessary.

\par This note is organized as follows: in Section 2, we recall some definitions and known results about holomorphic foliations of arbitrary dimension on complex manifolds.
Section 3 is devoted to prove Theorem \ref{singu}. In Section 4, we recall the Baum-Bott formula. In Section 5, we give some definitions and results about $q$-complete spaces and holomorphic foliations.
Finally, in Section 6, we proved Theorem \ref{main_theorem}.
\section{Higher codimensional holomorphic foliations}
Let $X$ be a complex manifold. A \emph{holomorphic foliation} $\F$,
of codimension $k\geq 1$, on $X$   is determined by a nonzero coherent
subsheaf $T\F\subsetneq T_X$, of generic rank $n-k$, satisfying
\begin{enumerate}
    \item[(i)] $\F$ is closed under the Lie bracket, and
    \item[(ii)] $\F$ is saturated in $T_X$ (i.e., $T_X / T\F$ is torsion free).
\end{enumerate}
The locus of points where $T_X / T\F$ is not locally free is called
the singular locus of $\F$, denoted here by $\singf$.

Condition $(i)$ allows us to apply Frobenius Theorem to ensure that
for every point $x$ in the complement of $\singf$, the germ of $T\F$
at $x$ can be identified with the relative tangent bundle of a germ
of smooth fibration $f : (X, x) \rightarrow ( \mathbb{C}^{k},0)$.
Condition $(ii)$  implies that $T\F$ is reflexive and of codimension
of $\singf$ is at least two.

There is a dual point of view where $\mathcal F$ is determined by a
subsheaf $N^*_{ \mathcal F}$, of generic rank $k$, of the cotangent
sheaf $\Omega^1_X = T^* X$ of $X$. The sheaf $N_{\mathcal{F}}^*$ is
called \emph{conormal sheaf} of $\fol$. The involutiveness asked for
in condition $(i)$ above is replace by integrability: if $d$ stands
for the exterior derivative then $dN_{\mathcal{F}}^* \subset
N_{\mathcal{F}}^* \wedge \Omega^1_X$ at the level of local sections.
Condition $(ii)$ is unchanged: $\Omega^1_X /N_{\mathcal{F}}^*$ is
torsion free.

The normal bundle $N_{ \mathcal F}$ of $\mathcal F$ is defined as
the dual of $N_{\mathcal{F}}^* $. We have the following exact
sequence
\[
0 \to T\mathcal F \to TX \to \mathcal{I}_{Z}\cdot N_{\mathcal F} \to 0 \,, 
\]
where $\mathcal{I}_{Z}$ is an ideal sheaf supported in $\sing(\fol)$.
The $k$-th wedge product of the inclusion $N^*_\F\subset \Omega^1_X$
gives rise to a nonzero twisted differential $k$-form $\omega\in
H^0(X,\Omega^{k}_X\otimes \mathcal{N})$ with coefficients in the
line bundle $\mathcal{N}:=\det(N_\F)$, which is \emph{locally
decomposable} and \emph{integrable}. To say that $\omega\in
H^0(X,\Omega^{k}_X\otimes \mathcal{N})$ is locally decomposable
means that, in a neighborhood of a general point of $X$, $\omega$
decomposes as the wedge product of $k$ local $1$-forms
$\omega=\eta_1\wedge\cdots\wedge\eta_{k}$. To say that it is
integrable means that for this local decomposition one has
$$
d\eta_i\wedge \eta_1\wedge\cdots\wedge\eta_{k}=0,   \ \ \forall\ \
i=1,\ldots,k.
$$
Conversely, given a twisted $k$-form $\omega\in
H^0(X,\Omega^{k}_X\otimes \mathcal{N})\setminus\{0\}$ which is
locally decomposable and integrable, we define a foliation of
codimension  $k$ on $X$ as the kernel of the morphism
$$\imath_{\omega}:TX \to \Omega^{k-1}_X\otimes \mathcal{N}$$
given by the contraction with $\omega$.

Let $Y$ be an analytic subset of $X$ pure codimension $k$. We say
that $Y$ is invariant by $\F$ if $\omega_{|Y}\equiv 0$, where
$\omega\in H^0(X,\Omega^{k}_X\otimes \mathcal{N})$ is the twisted
$k$-form inducing $\F$.

\par We specialize to the case $X=\P^n$. In this context, let $\mathcal F$ be a singular holomorphic foliation  on $\mathbb P^n$, of codimension $k\geq 1$,   given by  a
locally decomposable and integrable  twisted $k$-form
$$
\omega \in \mathrm H^0(\mathbb P^n,\Omega^{k}_{\mathbb P^n} \otimes
\mathcal N).
$$

 The degree of $\mathcal F$, denoted by
$\deg(\mathcal F)$, is by definition the degree of the zero locus of
$i^*\omega$, where $i:\mathbb P^{k}\to \mathbb P^n$ is a linear
embedding of a generic $k$-plane. Since $\Omega^{k}_{\mathbb P^{k}}
= \mathcal O_{\mathbb P^q}(-k-1)$ it follows at once that $\mathcal
N = \mathcal O_{\mathbb P^n}(\deg(\mathcal F)+k+ 1)$. In particular,
$\mathcal N$ is ample.

The vector space $  \mathrm H^0(\mathbb P^n,\Omega^{k}_{\mathbb P^n}
\otimes \mathcal O_{\mathbb P^n}(\deg(\mathcal F)+k+ 1))$  can be
canonically
 identified with the vector space of $k$-forms on $\mathbb
C^{n+1}$ with homogeneous coefficients of degree $d+1$ whose
contraction with
 the radial (or Euler) vector field $\mathcal{R}=\sum_{i=0}^n x_i
\frac{\partial}{\partial x_i}$ is identically zero \cite{J}.

\par When $\F$ is a holomorphic foliation on $\mathbb{P}^2$. It is well known that an
algebraic curve $C$ invariant by $\F$  cannot be disjoint to the singularities of $\fol$. In fact, it
follows from Camacho-Sad index Theorem \cite{CS} that
$$
0<\deg(C)^2=\deg(N_C|_{C})=\sum_{p\in \singf \cap C} CS(\F,C,p).
$$
Then $\singf \cap C \neq \emptyset.$ Furthermore, we have the following.

\begin{proposition}
Let $X$ be a  projective manifold and $\F$ a singular holomorphic
foliation, of codimension $k\geq 1$,  on $X$. Let $Y\subset X$ be a closed
subscheme of pure codimension $k$ invariant by $\F$, and $N$ the
normal sheaf of $Y$. Assume $Pic(X)=\mathbb{Z}$, and that there is a
closed curve $C \subset X$, contained in the smooth locus $U$ of $Y$
such that $\deg(N|_{C})>0$. Then $\singf
\cap Y \neq \emptyset$.
\end{proposition}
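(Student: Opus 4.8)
The plan is to argue by contradiction, running the higher-codimensional analogue of the Camacho--Sad computation recalled just above, with the Bott partial connection replacing the index formula. Suppose $\singf\cap Y=\emptyset$; I will show that then $\deg(N|_{C})=0$, contradicting the hypothesis $\deg(N|_{C})>0$. Since $\singf\cap Y=\emptyset$, the foliation $\F$ is nonsingular in an open neighborhood of $Y$, so there $N_{\F}$ is a genuine holomorphic vector bundle of rank $k$ and $\F$ is locally a smooth fibration onto $\C^{k}$.

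The first step is to identify the normal data of $Y$ with that of $\F$ along $C$. From the exact sequence $0\to T\F\to TX\to \mathcal{I}_{Z}\cdot N_{\F}\to 0$ and the fact that $Z=\singf$ is disjoint from $Y$, the ideal sheaf $\mathcal{I}_{Z}$ is trivial in a neighborhood of $Y$, so $TX/T\F\cong N_{\F}$ there. On the smooth locus $U$ of $Y$ one uses the invariance $\omega|_{Y}\equiv 0$ together with $\dim U=n-k=\operatorname{rk}T\F$ to conclude that $U$ is tangent to $\F$, that is $TU=T\F|_{U}$ and $U$ is a union of leaves; hence the normal bundle of $U$ satisfies $N|_{U}=TX|_{U}/TU=N_{\F}|_{U}$. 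Restricting to $C\subset U$ gives $N|_{C}\cong N_{\F}|_{C}$ and in particular $\det(N|_{C})\cong \mathcal{N}|_{C}$, where $\mathcal{N}=\det(N_{\F})$. Here $\operatorname{Pic}(X)=\Z$ enters as in Camacho--Sad: writing $\mathcal{N}=\oo_{X}(m)$ for the ample generator, $\deg(N|_{C})=m\,(\oo_{X}(1)\cdot C)$, so the hypothesis $\deg(N|_{C})>0$ is exactly the positivity $m>0$, i.e. the ampleness of $\mathcal{N}$, that plays the role of $\deg(C)^{2}>0$.

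The second step produces a holomorphic connection on $N|_{C}$. Because $\F$ is regular near $Y$, the bundle $N_{\F}$ carries its Bott partial holomorphic connection $\nabla$, defined for differentiation along vectors tangent to $\F$. Since $TC\subset TU=T\F|_{C}$, every tangent vector of $C$ is a leaf direction, and therefore $\nabla$ restricts to a genuine holomorphic connection on $N_{\F}|_{C}=N|_{C}$ over the curve $C$. A holomorphic vector bundle on a compact curve that admits a holomorphic connection has vanishing Atiyah class; taking the trace, its first Chern class vanishes in $H^{1}(C,\Omega^{1}_{C})$ and hence its degree is zero. (If $C$ is singular one pulls everything back to the normalization, where the degree is unchanged.) Thus $\deg(N|_{C})=\deg(N_{\F}|_{C})=0$, contradicting $\deg(N|_{C})>0$, and we conclude $\singf\cap Y\neq\emptyset$.

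The main obstacle is the tangency claim in the first step: extracting from the sheaf-theoretic invariance $\omega|_{Y}\equiv 0$, together with $\operatorname{cod}Y=\operatorname{cod}\F$ and regularity of $\F$ along $Y$, the genuine geometric statement that $C$ lies in a leaf, equivalently that $\nabla$ restricts to an honest connection on $N|_{C}$. This is precisely the point where the comparison $N|_{C}=N_{\F}|_{C}$ is justified; once it is in hand, Bott vanishing forces $\deg(N|_{C})=0$ and the positivity hypothesis supplies the contradiction exactly as $\deg(C)^{2}>0$ does on $\P^{2}$.
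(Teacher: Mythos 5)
The paper's own proof is a one-line reduction: the Proposition is essentially Esteves--Kleiman \cite[Proposition 3.4]{EK} quoted verbatim, so nothing is argued beyond the citation. Your proof is therefore a genuinely different, self-contained route: you replace their algebraic argument by the classical differential-geometric one, identifying $N|_{C}$ with $N_{\F}|_{C}$ and killing its degree with the Bott partial connection. That is the right mechanism, it makes the analogy with Camacho--Sad transparent, and it incidentally shows that $\mathrm{Pic}(X)=\mathbb{Z}$ plays no role in the contradiction (you only use it to rephrase the positivity hypothesis), whereas it is a standing hypothesis in the cited result.

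There is, however, a genuine gap at the step you yourself flag as the main obstacle, namely the tangency claim $TU=T\F|_{U}$. With the paper's definition of invariance ($\omega|_{Y}\equiv 0$), this inference is valid only for $k=1$: for $k\geq 2$ the vanishing of the decomposable $k$-form $\eta_{1}\wedge\cdots\wedge\eta_{k}$ on the $(n-k)$-dimensional space $T_{p}U$ only says that the restrictions $\eta_{i}|_{T_{p}U}$ are linearly \emph{dependent}, not that they all vanish, so it does not force $T_{p}U=T_{p}\F$. Concretely, for $\omega=dx_{1}\wedge dx_{2}$ on $\C^{4}$ and $Y=\{x_{1}=x_{3}=0\}$ one has $\omega|_{Y}\equiv 0$ although $Y$ is not a union of leaves; and when $2k>n$ the condition $\omega|_{Y}\equiv 0$ is vacuous for dimension reasons. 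To repair the argument you must take invariance in the stronger, standard sense $T\F|_{U}\subseteq TU$ (equivalently, $U$ is a union of open pieces of leaves), which is what Esteves--Kleiman's notion of a ``solution'' of the Pfaff field amounts to for subschemes of dimension equal to the leaf dimension. With that reading, the remainder of your argument --- the holomorphic Bott connection on $N_{\F}|_{C}$, pullback to the normalization if $C$ is singular, vanishing of the Atiyah class, hence $\deg(N|_{C})=0$ --- is correct.
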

\begin{proof}
This follows from Esteves-Kleiman's result \cite[
Proposition 3.4, pg. 12]{EK}. In fact, in this case we have that $\singf
\cap Y \neq \emptyset$.
\end{proof}

\section{Proof of Theorem \ref{singu}}
\par Denote by $S=\singf$. Suppose that
$\dim_{\mathbb{C}} S\leq n-k-2$. Consider the cohomological exact
sequence
$$
\cdots \rightarrow H^{2k+1}(M,U,\mathbb{C})\rightarrow
H^{2k+2}(M,\mathbb{C})\stackrel{\zeta}{\rightarrow}
H^{2k+2}(U,\mathbb{C})\rightarrow\cdots
$$
where $U=M\setminus S$. Now consider the Alexander duality
$$
A: H^r(M,U,\mathbb{C})\rightarrow H_{2n-r}(S,\mathbb{C}).
$$
Taking $r=2k+1$ and using that $\dim_{\mathbb{R}}S\leq 2(n-k)-4$, we
conclude that $H_{2(n-k)-1}(S,\mathbb{C})=0.$ In particular,
$H^{2k+1}(M,U,\mathbb{C})=0$ and then the map
$$
H^{2k+2}(M,\mathbb{C})\stackrel{\zeta}{\rightarrow}
H^{2k+2}(U,\mathbb{C})
$$
is injective. On the other hand, by Bott's vanishing Theorem, we
have
$$c^{k+1}_1(N_{\mathcal{F}}|_{U})=0.$$
Since $\zeta(c^{k+1}_1(N_{\mathcal{F}}
))=c^{k+1}_{1}(N_{\mathcal{F}}|_{U})$, we conclude that
$$c^{k+1}_{1}(N_{\mathcal{F}} )=0.$$ This is a contradiction, since
$c_1(N_{\mathcal{F}} )=c_1(\det(N_{\mathcal{F}}))$ and the ampleness
of $\nn=\det(N_{\mathcal{F}})$ implies that the cohomology class
$c^{k+1}_{1}(\det(N_{\mathcal{F}}))$ is non zero.

\section{Baum-Bott formula}

In this section we recall basic facts on Baum-Bott's Theory. For more details see Baum-Bott \cite{baum} and Suwa \cite{suwa}.
\par Let $\fol$ be a holomorphic foliation of codimension $k$ on a complex manifold $X$, $\dim X=n>k$.
Assume that $\fol$ is induced by $\omega\in
H^{0}(X,\Omega^{k}_{X}\otimes\nn)$. Denote by $\sing_{k+1}(\fol)$,
the union of the irreducible components of $\sing (\fol)$ of pure
codimension $k+1$. We are interested in the localization of
\textit{Baum-Bott's class} of $\fol$ over $\sing_{k+1}(\fol)$. Set
\begin{align*}
X^0=X\setminus\sing(\fol)\,\,\,\,\,\,\,\,\,\text{and}\,\,\,\,\,\,\,\,\,\,\,\,X^{*}=X\setminus\sing_{k+1}(\fol).
\end{align*}
\par Take $p_{0}\in X^0$, then in a neighborhood $U_{\alpha}$ of $p_{0}$, $\omega$ decomposes as the wedge product of $k$
local $1$-forms
$\omega_{\alpha}=\eta^{\alpha}_1\wedge\cdots\wedge\eta^{\alpha}_{k}$.
It follows from De Rham Division theorem that the Frobenius
condition
\begin{align}
d\eta^{\alpha}_{\ell}\wedge
\eta^{\alpha}_1\wedge\cdots\wedge\eta^{\alpha}_{k}=0,\,\,\,\,\,\,\forall\,\,\,\ell=1,\ldots,k,
\end{align}
 is equivalent to find a
matrix of holomorphic 1-forms $(\theta^{\alpha}_{\ell s})$, $1\leq
\ell,s\leq k$ satisfying
\begin{align}\label{equa_theta}
d\eta^{\alpha}_{\ell}=\sum^{k}_{s=1}\theta^{\alpha}_{\ell s}\wedge\eta^{\alpha}_{s},\,\,\,\,\,\,\,\forall\,\,\,\ell=1,\ldots,k.
\end{align}
Let $\theta_{\alpha}:=\sum^{k}_{\ell=1}(-1)^{\ell-1}\theta^{\alpha}_{\ell\ell}.$
On $U_{\alpha}\cap U_{\beta}\neq\emptyset$, we have
$\omega_{\alpha}=g_{\alpha\beta}\omega_{\beta},$ where
$g_{\alpha\beta}\in\mathcal{O}^{*}(U_{\alpha}\cap U_{\beta})$ and
$\{g_{\alpha\beta}\}$ defines $\nn$ so that
$d\omega_{\alpha}=dg_{\alpha\beta}\wedge\omega_{\beta}+g_{\alpha\beta}d\omega_{\beta}$.
From (\ref{equa_theta}), we find
$$\left (\frac{dg_{\alpha\beta}}{g_{\alpha\beta}}+\sum^{k}_{\ell=1}(-1)^{\ell-1}\theta^{\beta}_{\ell\ell}-\sum^{k}_{\ell=1}(-1)^{\ell-1}\theta^{\alpha}_{\ell\ell}\right)\wedge\omega_{\alpha}=0,$$
which means that
$$\gamma_{\alpha\beta}:=\frac{dg_{\alpha\beta}}{g_{\alpha\beta}}+\theta_{\beta}-\theta_{\alpha}$$
is a section of $N^{*}_{\fol}$, over $U_{\alpha}\cap U_{\beta}$.
Hence $\{\gamma_{\alpha\beta}\}$ is a cocycle of 1-forms vanishing
on $\fol$, and it corresponds to a cohomology class in
$H^{1}(X,N^{*}_{\fol})$. By taking the cup product $k$-times, we
have the natural map
$$H^{1}(X,N^{*}_{\fol})\otimes\ldots\otimes H^{1}(X,N^{*}_{\fol})\rightarrow H^{k}(X,\nn^{*}),$$
and so we get a class in $H^{k}(X,\nn^{*})$ associated to
$\{\gamma_{\alpha\beta}\}$. This class (in $H^{k}(X,\nn^{*})$) is
intrinsically defined by the foliation, that is, it does not depend
of the choice made so far.
\par On the other hand, in the singular case, the Saito-De Rham Division theorem \cite{rham} implies that the above construction can be made on $X^{*}$. Hence we get a well defined class
(\textit{Baum-Bott's class of $\fol$})
$$BB_{\fol}\in H^{k}(X^{*},\nn^{*})$$ which is intrinsically associated to $\fol$.
\par Let $p\in\sing_{k+1}(\fol)$. We say that $BB_{\fol}$ extends through $p$ if there is a small ball $B_{p}\subset X$ centered at $p$ such that $BB_{\fol}$  extends to a class in $H^{k}(X^{*}\cup B_{p},\nn^{*})$. Denoting
\begin{align*}
S(B_{p})=\sing_{k+1}(\fol)\cap
B_{p}\,\,\,\,\,\,\,\,\,\text{and}\,\,\,\,\,\,\,\,\,\,\,\,B^{*}_{p}=B_{p}\setminus
S(B_{p}),
\end{align*}
and applying Mayer-Vietoris argument, we observe that $BB_{\fol}$ extends through $p$ if and only if $$BB_{\fol}|_{B^{*}_{p}}=0$$
for some ball $B_{p}$  centered at $p$.

\par Now we state Baum-Bott's formula, which is related to the extendibility of the class $BB_{\fol}$ from $X^{*}$ to $X$.
In this sense, we consider
$\omega=\eta_1\wedge\ldots\wedge\eta_{k}$ a local generator of
$\fol$ at $p$ and smooth sections of $N^{*}_{\fol}$ instead of
holomorphic ones, we have the cohomology group $H^{1}(B^{*}_{p},
N^{*}_{\fol})$ is trivial, and so it is possible find a matrix of
smooth $(1,0)$-forms $(\theta_{ls})$, where $\theta_{\ell s}\in
A^{1,0}(B^{*}_{p})$, $1\leq \ell,s\leq k$, such that
\begin{align}
d\eta_{\ell}=\sum^{k}_{s=1}\theta_{\ell s}\wedge\eta_{s},\,\,\,\,\,\,\,\forall\,\,\,\ell=1,\ldots,k.
\end{align}
\par As before, set $\theta=\sum^{k}_{\ell=1}(-1)^{\ell-1}\theta_{\ell\ell}$. Observe that the smooth $(2k+1)$-form
$$\frac{1}{(2\pi i)^{k+1}}\theta\wedge\underbrace{d\theta\wedge\ldots\wedge d\theta}_{k-th}$$ is closed and it has  a
De Rham cohomology class in $H^{2k+1}(B^{*}_{p},\mathbb{C})$ and
moreover it does not depend on the choice of $\omega$ and $\theta$.
\par Let $Z$ be an irreducible component of $\sing_{k+1}(\fol)$. Take a generic point $p\in Z$, that is, $p$ is a point
where $Z$ is smooth and disjoint from the other singular components.
Pick $B_{p}$ a ball centered at $p$ sufficiently small, so that
$S(B_p)$ is a subball of $B_p$ of dimension $n-k-1$. Then the De
Rham class can be integrated over an oriented $(2k+1)$-sphere
$L_{p}\subset B^{*}_{p}$ positively linked with $S(B_{p})$:
$$BB(\fol, Z)=\frac{1}{(2\pi i)^{k+1}}\int_{L_{p}}\theta\wedge (d\theta)^{k}.$$
This complex number is the \textit{Baum-Bott residue of $\fol$ along Z}. It does not depend on the choice of the generic point $p\in Z$.

\par Now we state the main result of this section.
The proof can be found in \cite{baum} or \cite[Theorem VI.3.7]{suwa}
in more general context. We recall that every irreducible component
$Z$ of $\sing_{k+1}(\fol)$ has a fundamental class $[Z]\in
H^{2k+2}(X,\C)$ (conveniently defined via the integration current
over Z).
\begin{theorem}[Baum-Bott \cite{baum}]\label{BB} Let $\F$ be a holomorphic foliation of codimension $k$ on a  complex manifold $X$.   Then the following hold:
\begin{itemize}
\item[(i)]  for each irreducible component $Z$ of $\ \sing_{k+1}(\fol)$ there exist
complex numbers $\lambda_Z(\F)$ which is determined by the local behavior of $\F$ near $Z$.

\item[(ii)] If $X$ is compact,
$$c^{k+1}_{1}(\mathcal{N})=\sum_{Z}\lambda_Z(\F)[Z],$$
where the sum is done over all irreducible components of  $\
\sing_{k+1}(\fol)$.
\end{itemize}
\end{theorem}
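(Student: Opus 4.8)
The plan is to read (ii) as a localization of the characteristic class $c_1^{k+1}(\nn)$ along $\sing_{k+1}(\fol)$, with the local residues $\lambda_Z(\fol)$ of part (i) given by the explicit linking-sphere integrals already introduced. The two main inputs are Bott's vanishing theorem, which kills $c_1^{k+1}(\nn)$ away from $\singf$, and a \v{C}ech--de Rham (or current-theoretic) comparison that pushes the class onto the singular set, where the codimension count then isolates the components of $\sing_{k+1}(\fol)$.

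For part (i), I would simply set $\lambda_Z(\fol):=BB(\fol,Z)$, the residue obtained by integrating $\tfrac{1}{(2\pi i)^{k+1}}\,\theta\wedge(d\theta)^{k}$ over an oriented $(2k+1)$-sphere $L_p\subset B_p^{*}$ positively linked with $S(B_p)$ at a generic point $p\in Z$. Three points need checking. First, closedness of the integrand: since $d\bigl(\theta\wedge(d\theta)^{k}\bigr)=(d\theta)^{k+1}$ and $B_p^{*}=B_p\setminus S(B_p)\subset X^{0}$, the $(k+1)$-fold power of the curvature of the Bott connection vanishes there, so the integral depends only on the class $[L_p]\in H_{2k+1}(B_p^{*},\C)$. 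Second, independence of the local choices (the decomposition $\omega=\eta_1\wedge\cdots\wedge\eta_{k}$ and the connection matrix $(\theta_{\ell s})$): two admissible choices differ by a form whose contribution to the integral is exact. Third, independence of $p$: this follows from connectedness of the generic locus of $Z$ together with local constancy of the integral. This exhibits $\lambda_Z(\fol)$ as a number determined by the germ of $\fol$ along $Z$.

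For part (ii), with $X$ compact, I would realize $c_1^{k+1}(\nn)\in H^{2k+2}(X,\C)$ as a class supported on $\singf$. Cover $X$ by $X^{0}=X\setminus\singf$ together with small balls $B_p$ about generic points of the singular components. On $X^{0}$, Bott's vanishing theorem furnishes a representative of $c_1^{k+1}(\nn)$ built from the curvature of a Bott connection adapted to $\fol$, and this representative vanishes identically. Comparing it with a global representative through a \v{C}ech--de Rham argument realizes $c_1^{k+1}(\nn)$ as the image of a relative class in $H^{2k+2}(X,X^{0},\C)\cong H^{2k+2}_{\singf}(X)$, that is, as a sum of contributions supported on the irreducible components of $\singf$.

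The final and most delicate step is to identify these contributions. Components $Z'$ of $\singf$ of codimension $\geq k+2$ have $\dim_{\R}Z'\leq 2n-2k-4$, so by the Alexander duality $H^{2k+2}_{Z'}(X)\cong H_{2n-2k-2}(Z',\C)=0$ (the same duality used in the proof of Theorem \ref{singu}); hence only the codimension-$(k+1)$ components survive, which is exactly why the sum ranges over the components of $\sing_{k+1}(\fol)$. For such a component $Z$, working in a tubular neighborhood, the Thom isomorphism identifies the supported contribution with $\lambda_Z(\fol)[Z]$, the proportionality constant being precisely the integral of the de Rham class over the linking $(2k+1)$-sphere, i.e.\ the residue $BB(\fol,Z)=\lambda_Z(\fol)$ of part (i). Summing over all components yields $c_1^{k+1}(\nn)=\sum_Z\lambda_Z(\fol)[Z]$. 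I expect the main obstacle to be exactly this last identification: matching the abstract \v{C}ech--de Rham localized contribution with the concrete linking-sphere integral, since it forces one to track the Thom class through the comparison of the global and Bott representatives; by contrast, the dimension-duality argument discarding the higher-codimension components, though essential, is comparatively routine.
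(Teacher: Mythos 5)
Your proposal is correct and follows the same overall strategy as the paper: both localize $c_1^{k+1}(\nn)$ by means of Bott's vanishing theorem and identify the local coefficients $\lambda_Z(\F)$ with the linking-sphere residues $BB(\fol,Z)$. The one place where your route genuinely diverges is the treatment of the singular components of codimension $\geq k+2$. You build the Bott-type representative only on $X^{0}=X\setminus\singf$, obtain a relative class in $H^{2k+2}(X,X^{0},\C)$ supported on all of $\singf$, and then discard the higher-codimension components by the dimension count $H_{2n-2k-2}(Z',\C)=0$ (the same duality used in the proof of Theorem \ref{singu}). The paper instead invokes the Saito--De Rham division theorem to extend the holomorphic connection matrices $(\theta^{\alpha}_{\ell s})$ across those components, so that the entire construction lives on $X^{*}=X\setminus\sing_{k+1}(\fol)$ from the outset; it then modifies the connection only in a neighborhood $V$ of $\sing_{k+1}(\fol)$, producing an explicit global closed form $\Theta^{k+1}$ with $Supp(\Theta^{k+1})\subset\overline{V}$, and evaluates the residue by Stokes' formula on a transversal $(k+1)$-ball $T$ with $V\cap T\Subset T$ rather than through the Thom isomorphism. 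Your version trades the division theorem for the duality argument, at the cost of carrying the components of codimension $\geq k+2$ through the relative-cohomology step before killing them; the paper's version is more hands-on and makes the identification of the localized contribution with $BB(\fol,Z)$ --- the step you rightly single out as the delicate one --- an immediate consequence of $\int_{\partial T}\hat{\theta}_{\alpha}\wedge(d\hat{\theta}_{\alpha})^{k}=\int_{T}(d\hat{\theta}_{\alpha})^{k+1}$. Both implementations are sound.
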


Let $U_{0}$ be a neighborhood of $\sing_{k+1}(\fol)$, then we have
that
$$
\sum_{Z}\lambda_Z(\F)[Z]=j^*Res_{c_{1}^{k+1}}(\F,
\sing_{k+1}(\fol)),
$$
where $Res_{c_{1}^{k+1}}(\F, \sing_{k+1}(\fol)) \in
H^{2(n-k)-2}(U_{0},\C)^{*} $ is a cocycle and
$$j*:  H^{2(n-k)-2}(U_{0},\C)^{*}\simeq  H^{ 2k+2}(X,X \backslash \sing_{k+1}(\fol),\C)\to  H^{ 2k-2}(X,\C) $$
is the induced map of the inclusion $j: (X,\emptyset) \to (X,X
\backslash \sing_{k+1}(\fol)  ).$ For more details about it, we refer \cite{brasselet}. 
\par In   \cite{baum} the complex numbers $\lambda_Z(\F)$ are not given  explicitly. We will show that
$$
\lambda_Z(\F)=BB(\fol,Z).
$$
This was proved by Brunella and Perrone in \cite{perrone} when
$k=1$.
\subsection{Proof of Theorem \ref{BB}}
We cover $X$ by open sets $U_{\alpha}$ where the foliation is
defined by holomorphic $k$-forms
$\omega_{\alpha}=\eta^{\alpha}_1\wedge\cdots\wedge\eta^{\alpha}_{k}$
with $\omega_{\alpha}=g_{\alpha\beta}\omega_{\beta}$.  As before, it
is possible find a matrix of $(1,0)$-forms $(\theta_{\ell s})$, where
$\theta_{\ell s}\in A^{1,0}(B^{*}_{p})$, $1\leq \ell,s\leq k$, such that
\begin{align}
d\eta^{\alpha}_{\ell}=\sum^{k}_{s=1}\theta^{\alpha}_{\ell s}\wedge\eta^{\alpha}_{s},\,\,\,\,\,\,\,\forall\,\,\,\ell=1,\ldots,k.
\end{align}
We fix a small neighborhood $V$ of $ \sing_{k+1}(\fol)$ and choose a
matrix of $(1,0)$-forms smooth $(\tilde{\theta}^{\alpha}_{\ell s})$ such
that $\tilde{\theta}^{\alpha}_{\ell s}$ coincide with
$\theta^{\alpha}_{\ell s}$ outside of $U_{\alpha}\cap V$. Let
$\tilde{\theta}_{\alpha}=\sum^{k}_{\ell=1}(-1)^{\ell-1}\tilde{\theta}^{\alpha}_{\ell\ell}$.
Then the smooth $(1,0)$-forms
$$\tilde{\gamma}_{\alpha\beta}=\frac{dg_{\alpha\beta}}{g_{\alpha\beta}}+\tilde{\theta}_{\beta}-\tilde{\theta}_{\alpha}$$
vanish on $\fol$ outside of $V$. This cocycle can be trivialized:
$\tilde{\gamma}_{\alpha\beta}=\tilde{\gamma}_{\alpha}-\tilde{\gamma}_{\beta}$,
where $\tilde{\gamma}_{\alpha}$ is a smooth $(1,0)$-form on
$U_{\alpha}$ vanishing on $\fol$ outside of $U_{\alpha}\cap V$.
Therefore, after setting
$\hat{\theta}_{\alpha}=\tilde{\theta}_{\alpha}+\tilde{\gamma}_{\alpha}$,
we find
$$\frac{dg_{\alpha\beta}}{g_{\alpha\beta}}=\hat{\theta}_{\alpha}-\hat{\theta}_{\beta}.$$

Hence, $\Theta=\frac{1}{2\pi i}d\hat{\theta}_{\alpha}$ is a globally defined closed 2-form which represents, in the De Rham sense, the Chern class of $\det(N_{\fol})=\nn$. Therefore,
$$\Theta^{k+1}:=\frac{1}{(2\pi i)^{k+1}}\underbrace{d\hat{\theta}_{\alpha}\wedge\ldots\wedge d\hat{\theta}_{\alpha}}_{(k+1)-th}$$ represents $c^{k+1}_{1}(N_{\fol})$.
It follows from Bott's vanishing theorem that $\Theta^{k+1}=0$
outside $V$, that is,
$$Supp(\Theta^{k+1})\subset \overline{V}.$$
\par If $T\subset X$ is a $(k+1)$-ball intersecting transversally $\sing_{k+1}(\fol)$ at a single point $p\in Z$, with $V\cap T\Subset T$, then by Stokes formula
\begin{eqnarray*}
BB(\fol, Z) & = & \frac{1}{(2\pi i)^{k+1}}\int_{\partial T}\hat{\theta}_{\alpha}\wedge (d\hat{\theta}_{\alpha})^{k} \\
& = & \frac{1}{(2\pi i)^{k+1}}\int_{T}(d\hat{\theta}_{\alpha})^{k+1}
\end{eqnarray*}
This means that the $2(k+1)$-form $\Theta^{k+1}$ is cohomologous, as
a current, to the integration current over $$ \sum_{Z}BB(\F,Z)[Z].
$$

\section{Strongly $q$-convex spaces}
\par In this section, we present some results about \textit{strongly $q$-convex spaces}. These results will be applied in the study of invariant sets of holomorphic foliations on complex manifolds. The concept of $q$-convexity was first introduced by Rothstein \cite{ro} and further developed by Andreotti-Grauert \cite{AG}. More details about it can be found in Demailly's book \cite{dema}.
\par Let $(M,\mathcal{O}_M)$ be a complex analytic space, possibly non reduced. Recall that a function $\varphi:M\to\mathbb{R}$ is said to be  strongly $q$-convex in the sense of Andreotti-Grauert \cite{AG} if there exists a covering of $M$ by open patches $A_{\lambda}$ isomorphic to closed analytic sets in open sets $U_{\lambda}\subset \C^{N_{\lambda}}$, $\lambda\in I$, such that each restriction $\varphi|_{A_{\lambda}}$ admits an extension $\tilde{\varphi}_{\lambda}$ on $U_{\lambda}$ which is strongly $q$-convex, $i.e.$ such that $i\partial\bar{\partial}\tilde{\varphi}_{\lambda}$ has at most $q-1$ negative or zero eigenvalues at each point of $U_{\lambda}$. Note that the strong $q$-convexity property does not depend on the covering nor on the embeddings $A_{\lambda}\subset U_{\lambda}$.    
\par The space $M$ is said to be \textit{strongly  $q$-complete, resp. strongly $q$-convex}, if $M$ has a smooth exhaustion function $\varphi$ such that $\varphi$ is strongly $q$-convex on $M$, resp. on the complement $M\setminus K$ of a compact set $K\subset M$. From \cite{De}, $M$ is said to be \textit{absolutely  $q$-convex} if it admits a smooth plurisubharmonic exhaustion function $\varphi:M\setminus K\to \mathbb{R}$ that is strongly $q$-convex on $M\setminus K$ for some compact set $K\subset M$.

\par We will use Ohsawa-Takegoshi-Demailly's Theorem \cite{Oh} and  Andreotti-Grauert vanishing theorem \cite{AG}.
\begin{theorem}[Ohsawa-Takegoshi-Demailly]\label{Ohsawa}
Let $U$ be an absolutely  $q$-convex K\"ahler manifold of dimension $n$. Then the De Rham cohomology groups with arbitrary supports have   decomposition
$$H^{k}(U,\mathbb{C}) \simeq \bigoplus_{s+\ell=k} H^{s}(U, \Omega^{\ell}), \ \ H^{s}(U, \Omega^{\ell})\simeq \overline{H^{\ell}(U, \Omega^{s}) }\     \ k\geq n+q.$$

\end{theorem}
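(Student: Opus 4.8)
The plan is to derive the decomposition from $L^2$-Hodge theory on the non-compact manifold $U$, following Demailly's treatment of $q$-convex spaces \cite{De}. The first step is to endow $U$ with a complete K\"ahler metric adapted to the convexity: starting from the plurisubharmonic exhaustion function $\varphi$ that is strongly $q$-convex on $U\setminus K$, one replaces the ambient K\"ahler form by $\om + i\partial\bar\partial\chi(\varphi)$ for a suitably convex increasing $\chi$, obtaining a complete K\"ahler metric whose curvature terms inherit the positivity encoded in the Levi form of $\varphi$ away from the compact set $K$.

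The heart of the argument is the Bochner-Kodaira-Nakano identity combined with the Andreotti-Grauert vanishing theorem \cite{AG}. By hypothesis $i\partial\bar\partial\varphi$ has at least $n-q+1$ positive eigenvalues, so the curvature term in the Bochner-Kodaira-Nakano inequality is positive on $(s,\ell)$-forms with $\ell\geq q$; since the constraint $k=s+\ell\geq n+q$ together with $s,\ell\leq n$ forces both $s\geq q$ and $\ell\geq q$, the $\bar\partial$- and $\partial$-Laplacians are simultaneously coercive in every bidegree $(s,\ell)$ contributing to $H^k$. Feeding this coercivity into the Ohsawa-Takegoshi $L^2$-estimates \cite{Oh} yields solvability of $\bar\partial$ with $L^2$ bounds, hence finiteness and an orthogonal Hodge-type decomposition of the $L^2$-Dolbeault complex in the range $s+\ell\geq n+q$; using completeness of the metric one then identifies the resulting space of $L^2$-harmonic $(s,\ell)$-forms with $H^s(U,\Omega^\ell)$ in this range.

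Granting the $L^2$-Hodge decomposition, the splitting $H^k(U,\mathbb{C})\simeq\bigoplus_{s+\ell=k}H^s(U,\Omega^\ell)$ is obtained by decomposing an $L^2$-harmonic $k$-form into its $(s,\ell)$-components and checking, via the K\"ahler identities $\Delta=2\Delta_{\bar\partial}=2\Delta_\partial$, that each bidegree component is again harmonic; this is exactly where the K\"ahler hypothesis enters and where the degree restriction $k\geq n+q$ guarantees the required coercivity in all contributing bidegrees. The conjugation symmetry $H^s(U,\Omega^\ell)\simeq\overline{H^\ell(U,\Omega^s)}$ then follows at once from the fact that $\Delta$ is a real operator commuting with complex conjugation, so that conjugation maps the space of harmonic $(s,\ell)$-forms isomorphically onto the space of harmonic $(\ell,s)$-forms.

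The step I expect to be the main obstacle is the identification of the reduced $L^2$-cohomology with the topological (arbitrary-support) cohomology in the top-degree range: one must show that the natural maps from $L^2$-cohomology to De Rham and Dolbeault cohomology are isomorphisms for $k\geq n+q$, with no contribution lost to the non-compactness of $U$. This requires the Andreotti-Grauert finiteness theorem to control the cohomology for $s\geq q$, a regularization argument producing smooth $L^2$ representatives, and an essential use of \emph{absolute} $q$-convexity (rather than mere $q$-convexity) to guarantee that the harmonic representatives are honestly closed and co-closed; this is the point at which both the convexity hypothesis and the bound $k\geq n+q$ are indispensable.
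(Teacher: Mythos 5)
The paper does not prove this statement: it is imported verbatim from the literature (Ohsawa \cite{Oh} for the reduction theorem and Demailly \cite{De} for the version adapted to absolutely $q$-convex spaces), so there is no internal proof to compare yours against. Judged on its own terms, your outline correctly identifies the general shape of the $L^2$-theoretic arguments in those references --- a complete K\"ahler metric built from $\chi(\varphi)$, the Bochner--Kodaira--Nakano inequality, the bidegree bookkeeping forced by $s+\ell\geq n+q$, and the conjugation symmetry coming from the reality of the Laplacian --- but it is an outline, not a proof. The step you yourself flag as ``the main obstacle'' (identifying reduced $L^2$-cohomology with the ordinary, arbitrary-support cohomology $H^{s}(U,\Omega^{\ell})$ and with De Rham cohomology in degrees $\geq n+q$) is precisely the substantive content of Ohsawa's reduction theorem and of Demailly's paper; leaving it as an expectation leaves the theorem unproved.

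There are also two concrete inaccuracies. First, the claim that the curvature term in the Bochner--Kodaira--Nakano inequality is positive on $(s,\ell)$-forms as soon as $\ell\geq q$ is not correct when $s<n$: in general bidegree the relevant eigenvalue combination has the form $(\lambda_{1}+\dots+\lambda_{\ell})-(\lambda_{s+1}+\dots+\lambda_{n})$, and its positivity under the hypothesis that at most $q-1$ eigenvalues of $i\partial\bar{\partial}\varphi$ are $\leq 0$ uses the full inequality $s+\ell\geq n+q$, not merely $\ell\geq q$; your subsequent observation that the degree constraint forces both $s\geq q$ and $\ell\geq q$ is true but is not by itself what makes the Laplacians coercive. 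Second, the ``Ohsawa--Takegoshi $L^2$-estimates'' you invoke are the extension theorem, which is not the relevant tool here; the citation \cite{Oh} is to Ohsawa's reduction theorem for very strongly $q$-convex K\"ahler manifolds, a different result. Since this is a deep imported theorem, the appropriate course in the context of this paper is to cite \cite{Oh} and \cite{De} rather than to reprove it.
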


\begin{theorem}[Andreotti-Grauert]\label{vanishing}
Let $U$ be a $q$-complete manifold of dimension $n$. For any coherent holomorphic sheaf $\mathcal{G}$ on $U$ and any
$j\geq q$, we have $$H^{j}(U, \mathcal{G})=0.$$
\end{theorem}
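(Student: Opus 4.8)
The plan is to deduce the statement from the $L^2$ theory of the $\bar\partial$-operator attached to the $q$-convex exhaustion. Fix a smooth exhaustion function $\varphi : U \to \mathbb{R}$ that is strongly $q$-convex on all of $U$, so that the sublevel sets $U_c = \{\varphi < c\}$ are relatively compact, exhaust $U$ as $c \to +\infty$, and the Levi form $i\partial\bar\partial\varphi$ has at least $n-q+1$ positive eigenvalues at every point. A first reduction passes from an arbitrary coherent sheaf $\mathcal{G}$ to the locally free case: since $\mathcal{G}$ is coherent it admits, locally, finite resolutions by free $\mathcal{O}_U$-modules, and a standard dévissage (induction on the length of such resolutions together with the associated hypercohomology spectral sequence) reduces the vanishing $H^j(U,\mathcal{G})=0$, $j\geq q$, to the vanishing of the Dolbeault groups $H^{0,j}_{\bar\partial}(U)=0$ for $j\geq q$ with values in a holomorphic vector bundle, i.e. to solving $\bar\partial u = f$ for every $\bar\partial$-closed $(0,j)$-form $f$.

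The analytic heart of the argument is this solvability of $\bar\partial$ in bidegree $(0,j)$ for $j\geq q$. On a relatively compact sublevel $U_c$ I would equip the manifold with a complete Kähler metric and a weight of the form $e^{-\chi(\varphi)}$, with $\chi$ convex and increasing, and invoke the Bochner–Kodaira–Nakano–Hörmander inequality. Writing $i\partial\bar\partial(\chi\circ\varphi)=\chi'(\varphi)\, i\partial\bar\partial\varphi+\chi''(\varphi)\, i\partial\varphi\wedge\bar\partial\varphi$, the first term carries the strong $q$-convexity (at least $n-q+1$ positive eigenvalues) while the second is positive semidefinite; choosing $\chi$ to grow fast enough lets the positive part of the curvature dominate on test forms of degree $j\geq q$, yielding the a priori estimate $\|f\|^2 \lesssim \|\bar\partial^{*} f\|^2 + \|\bar\partial f\|^2$ on forms orthogonal to the finite-dimensional harmonic space. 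The usual Hörmander argument then gives closed range for $\bar\partial$ and solvability on $U_c$.

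Finally I would pass from the relatively compact sublevels to $U$ itself. Because $\varphi$ is strongly $q$-convex on all of $U$, an Andreotti–Grauert bumping procedure shows that the restriction maps $H^{0,j}(U_{c'})\to H^{0,j}(U_c)$ with $c'>c$ are isomorphisms in degrees $j\geq q$; combined with an approximation (Runge-type) theorem, allowing solutions on $U_c$ to be approximated by solutions on larger sublevels, one controls $\varprojlim$ together with its $\varprojlim^{1}$ obstruction and concludes $H^{0,j}(U)=0$ for $j\geq q$. Unwinding the dévissage then gives $H^j(U,\mathcal{G})=0$ for every coherent $\mathcal{G}$ and all $j\geq q$.

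I expect the main obstacle to be the analytic step together with the passage to the limit. The delicate point is to make the Bochner–Kodaira curvature term dominate simultaneously in every degree $\geq q$ — note that the sum of the $j$ smallest eigenvalues of $i\partial\bar\partial\varphi$ need not be positive, so the growth of $\chi$ and the completeness of the metric on each sublevel must be arranged with care — and then to upgrade the resulting finiteness/closed-range statement on sublevels to an honest vanishing on $U$ through the approximation theorem. This last upgrade is precisely where strong $q$-completeness, as opposed to mere $q$-convexity, becomes indispensable.
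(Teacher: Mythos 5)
The paper does not actually prove this statement: it is quoted as the classical Andreotti--Grauert vanishing theorem with a citation to their 1962 paper, so there is no internal proof to compare yours against; I can only judge your sketch on its own terms, and it has a gap at the very first step that the rest of the argument cannot recover from. The d\'evissage you invoke requires a \emph{global} finite resolution of $\mathcal{G}$ by locally free $\mathcal{O}_U$-modules. Local free resolutions always exist by coherence, but they do not glue, and on a general complex manifold (in particular on a $q$-complete one with $q\geq 2$) a coherent sheaf need not even be a quotient of a locally free sheaf. Without a global locally free complex quasi-isomorphic to $\mathcal{G}$ there is no hypercohomology spectral sequence reducing $H^j(U,\mathcal{G})$ to Dolbeault groups with values in a vector bundle, so the reduction to ``solving $\bar\partial u=f$'' never gets off the ground. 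This is precisely why the actual proofs (Andreotti--Grauert's original one, and the exposition in Demailly's book cited in this paper) do not pass through bundles at all: they compute \v{C}ech cohomology of the coherent sheaf on coverings by Stein open sets, where Cartan's Theorem B handles $\mathcal{G}$ locally, and propagate vanishing across the exhaustion by the bumping lemma together with a Mittag-Leffler/approximation argument at the limit.

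Second, even in the locally free case the analytic step as you describe it does not close. Replacing $\varphi$ by $\chi\circ\varphi$ with $\chi$ convex increasing multiplies $i\partial\bar\partial\varphi$ by $\chi'(\varphi)>0$ and adds the rank-one semipositive term $\chi''(\varphi)\,i\partial\varphi\wedge\bar\partial\varphi$; neither operation can turn the possibly negative sum of the $j$ smallest eigenvalues into a positive quantity, which is exactly the difficulty you flag but leave unresolved. The known remedy is different: one chooses the background Hermitian metric adapted to $\varphi$, so that with respect to it the at most $q-1$ nonpositive eigenvalues of $i\partial\bar\partial\varphi$ have arbitrarily small absolute value compared with the $n-q+1$ positive ones. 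Moreover the Bochner--Kodaira inequality naturally controls $(n,j)$-forms, so what one obtains directly is the vanishing of $H^j(U,\Omega^n_U\otimes E)$, and an extra twist by $K_U^{-1}$ is needed to reach $H^{j}(U,E)$. As written, your proposal is a reasonable table of contents for the $L^2$ approach to the vector bundle case, but it does not constitute a proof of the theorem as stated for arbitrary coherent sheaves.
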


\par We recall that a function $\varphi:M\to\mathbb{R}$ is strongly $q$-convex with corners on $M$ if for every point $p\in M$ there is a neighborhood $U_p$ and finitely many strongly $q$-convex functions $\{\varphi_{p,j}\}_{j\leq \ell_p}$ on $U_p$ such that $\varphi|_{U_p}=\max_{j\leq \ell_p}\{\varphi_{p,j}\}$. The manifold $M$ is said to be \textit{strongly $q$-convex with corners} if it admits an exhaustion function which is strongly $q$-convex with corners outside a compact set. Similarly, $M$ is said to be \textit{strongly $q$-complete with corners} if it admits an exhaustion function which is strongly $q$-convex with corners.
\par In \cite{DF}, Diederich and Forn\ae ss proved the following result.
\begin{theorem}[Diederich-Forn\ae ss]\label{D-F}
Any $q$-convex ($q$-complete) manifold $U$ with corners, $\dim U=n$,
is $\tilde{q}$-convex ($\tilde{q}$-complete)
 with $\tilde{q}=n-\left[\frac{n}{q}\right]+1$.
\end{theorem}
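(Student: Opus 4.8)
This is the classical smoothing theorem of Diederich and Forn\ae ss, so the plan is to reconstruct their argument via the regularized maximum. First I would reduce the global statement to a local smoothing question. By definition, an exhaustion $\varphi$ that is strongly $q$-convex with corners is locally of the form $\varphi|_{U_p}=\max_{j\le \ell_p}\varphi_{p,j}$ with each $\varphi_{p,j}$ strongly $q$-convex, so it suffices to produce on each patch a genuinely smooth function comparable to such a finite maximum and carrying enough positive Hessian eigenvalues, and then to glue. For the gluing I would use that the regularized maximum $M_\eta(t_1,\dots,t_p)$ --- the standard smooth, convex, componentwise nondecreasing approximation of $\max$ normalized by $\sum_i \partial M_\eta/\partial t_i=1$ --- coincides with the honest maximum whenever its arguments are $\eta$-separated. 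This locality makes the smoothings on overlapping patches agree and yields a single global smooth exhaustion, which moreover stays comparable to $\varphi$ since $M_\eta$ differs from $\max$ by at most $\eta$.

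Second, I would carry out the Levi-form computation for $u=M_\eta(\varphi_1,\dots,\varphi_p)$. Differentiating twice gives
\[
i\partial\bar\partial u=\sum_i \frac{\partial M_\eta}{\partial t_i}\, i\partial\bar\partial\varphi_i+\sum_{i,j}\frac{\partial^2 M_\eta}{\partial t_i\partial t_j}\, i\,\partial\varphi_i\wedge\bar\partial\varphi_j.
\]
Because $M_\eta$ is convex the second sum is positive semidefinite, and because it is componentwise nondecreasing the first sum is a convex combination of the Levi forms $i\partial\bar\partial\varphi_i$, with strictly positive weights on the active indices. Hence the number of positive eigenvalues of $i\partial\bar\partial u$ is at least that of this convex combination. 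Since each $\varphi_i$ is strongly $q$-convex, its Levi form is positive on a subspace $V_i$ of dimension at least $n-q+1$, so a positive combination of the active Levi forms is positive on $\bigcap_i V_i$.

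Third --- and this is the crux --- I would establish the bound $\tilde q=n-[n/q]+1$, equivalently that the smoothed function can be guaranteed at least $n-\tilde q+1=[n/q]$ positive eigenvalues. The naive estimate $\dim\bigcap_i V_i\ge n-p(q-1)$, where $p$ is the number of simultaneously active functions, is too weak, because $p$ is not pointwise bounded in general. The real work is to control this deficiency: one exploits the extra positive term $\sum \partial^2 M_\eta/\partial t_i\partial t_j\, i\,\partial\varphi_i\wedge\bar\partial\varphi_j$, which supplies positivity in the directions spanned by the complex gradients $\partial\varphi_i$, and one organizes the smoothing --- tuning the separation parameters and, if necessary, grouping the $\varphi_{p,j}$ and iterating $M_\eta$ --- so that the potentially negative directions are forced into the span of those gradients. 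The sharp count that exactly $[n/q]$ positive directions survive in the worst case is the delicate linear-algebra and combinatorial heart of the theorem, and I expect it to be the main obstacle; the constant $[n/q]$ is precisely what balances the codimension-$(q-1)$ loss per active function against the ambient dimension $n$, as one checks for $q\mid n$, where $n-[n/q]=(n/q)(q-1)$, i.e.\ $[n/q]$ active functions are afforded.

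Finally, once the local strongly $\tilde q$-convex smoothings are in hand and glued using the locality of $M_\eta$, I would verify that the resulting global smooth function is still an exhaustion and is strongly $\tilde q$-convex outside the same compact set $K$, which gives the $\tilde q$-convex conclusion. The $\tilde q$-complete case is identical, taking $K=\emptyset$ so that the exhaustion is strongly $q$-convex with corners everywhere.
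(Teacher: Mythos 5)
There is nothing in the paper to compare your argument against: Theorem~\ref{D-F} is quoted verbatim from Diederich--Forn\ae ss \cite{DF} and used as a black box (only its Corollary~\ref{Homoge}, combined with Peternell's theorem, enters the proof of Theorem~\ref{domain-corner}). So your proposal has to stand on its own as a reconstruction of the original proof.

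As such, it correctly identifies the standard machinery --- the regularized maximum $M_\eta$, the Levi-form identity
$i\partial\bar\partial u=\sum_i \frac{\partial M_\eta}{\partial t_i}\, i\partial\bar\partial\varphi_i+\sum_{i,j}\frac{\partial^2 M_\eta}{\partial t_i\partial t_j}\, i\,\partial\varphi_i\wedge\bar\partial\varphi_j$, and the locality of $M_\eta$ for gluing --- but it stops exactly where the theorem begins. The whole content of the statement is the value $\tilde q=n-[n/q]+1$, and your third step concedes that "the sharp count\dots is the delicate linear-algebra and combinatorial heart of the theorem" without supplying it. This is not a cosmetic omission: the first sum alone can have \emph{no} positive eigenvalues (already for $n=q=2$, take Levi forms $\mathrm{diag}(1,-C)$ and $\mathrm{diag}(-C,1)$ with $C$ large; their average is negative definite, and $\bigcap_i V_i=\{0\}$), so everything rests on extracting positivity from the Hessian term $\sum_{i,j}\partial^2 M_\eta/\partial t_i\partial t_j\,i\,\partial\varphi_i\wedge\bar\partial\varphi_j$ in the directions transverse to the corner, and then proving the quantitative lemma that after suitable perturbation and grouping at least $[n/q]$ positive directions survive for \emph{every} configuration of active indices. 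That lemma, together with the general-position/perturbation argument needed to invoke it uniformly, is the proof in \cite{DF}; your sketch names the target and verifies the constant is consistent when $q\mid n$, but does not prove it. The remaining steps (reduction to local patches, gluing via the locality of $M_\eta$, preservation of the exhaustion property and of the compact set $K$) are fine and routine.
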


We will also use the following result by M. Peternell for
homogeneous manifolds \cite{P}.

\begin{theorem}[Peternell]\label{P}
If $X $ is a homogeneous compact complex manifold and
$U\varsubsetneq X$ is a open set in $X$ that is $q$-convex with
corners then $U$ is $q$-complete with corners.
\end{theorem}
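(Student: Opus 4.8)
The plan is to exploit the transitive action of a Lie group $G$ on the homogeneous manifold $X = G/H$, whose elements act by biholomorphisms of $X$, in order to repair the exhaustion of $U$ precisely where it fails to be convex. By hypothesis $U$ carries a smooth exhaustion $\varphi:U\to\mathbb{R}$ which is strongly $q$-convex with corners on $U\setminus K$ for some compact $K\Subset U$, and the entire task is to replace $\varphi$ by an exhaustion of $U$ that is strongly $q$-convex with corners on \emph{all} of $U$, i.e.\ to cure the behaviour over the compact set $K$. The basic observation is that a biholomorphism pulls back a function that is $q$-convex with corners to one with the same property, since the operator $i\partial\bar\partial$ transforms tensorially; hence each translate $\psi_j:=\varphi\circ g_j^{-1}$, with $g_j\in G$, is strongly $q$-convex with corners on $g_j(U)\setminus g_j(K)$.

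First I would prove a covering lemma. Because $U\subsetneq X$ is proper and $K$ is compact, $U\setminus K$ is a nonempty open subset of $X$. By transitivity, for every $x\in X$ there is $g\in G$ with $g^{-1}(x)\in U\setminus K$, so the family $\{g(U\setminus K)\}_{g\in G}$ is an open cover of $X$; compactness of $X$ then yields finitely many $g_1,\dots,g_m$ with $X=\bigcup_{j=1}^m g_j(U\setminus K)$. Thus for every $x\in X$, and in particular for every $x\in K$, there is an index $j$ with $g_j^{-1}(x)\in U\setminus K$, so that the translate $\psi_j$ is defined and strongly $q$-convex with corners at $x$.

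Next I would assemble a new exhaustion by a regularized maximum. Set $\Psi=\max\bigl(\varphi,\psi_1+c_1,\dots,\psi_m+c_m\bigr)$ on $U$, reading each $\psi_j$ as $-\infty$ outside $g_j(U)$ and choosing the constants $c_j$ suitably. Including $\varphi$ forces $\Psi\geq\varphi$, so $\Psi$ is again an exhaustion of $U$. Near $\partial U$ the dominating term is automatically $q$-convex with corners: a translate $\psi_j$ can overtake $\varphi$ only where it tends to $+\infty$, which forces $g_j^{-1}(x)\to\partial U$ and hence $g_j^{-1}(x)\notin K$, so $\psi_j$ is convex there. Over $K$, the covering lemma lets me pick the $c_j$ so that the maximum is attained only by translates $\psi_j$ with $g_j^{-1}(x)\notin K$, which are strongly $q$-convex with corners, while $\varphi$ is strictly dominated. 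Since the maximum of finitely many strongly $q$-convex functions is strongly $q$-convex with corners, $\Psi$ inherits the property at every point of $U$; replacing the naive maximum by Demailly's smooth regularized maximum then produces a smooth exhaustion with the same local convexity conclusions.

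The main obstacle is the bookkeeping in this last patching step, and it stems from the mismatch of domains: the translates $\psi_j$ live on $g_j(U)$ rather than on $U$, and over a single point $x\in K$ several of them may be defined, some being $q$-convex (those with $g_j^{-1}(x)\notin K$) and some not (those with $g_j^{-1}(x)\in K$). To certify $q$-convexity with corners at $x$ one must guarantee that every term attaining, or near, the maximum is convex there, i.e.\ that the non-convex translates together with $\varphi$ are strictly beaten on $K$. Arranging this simultaneously at \emph{all} points of the compact set $K$ by a single choice of the constants $c_j$, while preserving the exhaustion property near $\partial U$ where, on the contrary, $\varphi$ (or a blowing-up translate) must win, is the delicate point; it is exactly here that the weights $c_j$ and the regularized maximum have to be tuned.
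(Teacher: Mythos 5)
First, a point of comparison: the paper offers no proof of this statement at all --- it is quoted as Peternell's theorem and referenced to his paper \emph{Continuous $q$-convex exhaustion functions} (Invent.\ Math.\ 85 (1986)), so there is no in-text argument to match yours against. Your overall strategy --- pull back the exhaustion $\varphi$ under finitely many group translates whose ``good'' regions cover the bad compact set $K$, then patch by a (regularized) maximum --- is indeed the mechanism behind the known proofs (Peternell, and Forn\ae ss--Sibony--Wold for $\mathbb{P}^n$). But as written the construction breaks down at two concrete points, the second of which you flag yourself without resolving.

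First, $\Psi=\max(\varphi,\psi_1+c_1,\dots,\psi_m+c_m)$ is not an exhaustion of $U$; it is not even continuous there. Each translate $\psi_j=\varphi\circ g_j^{-1}$ tends to $+\infty$ as its argument approaches $\partial\bigl(g_j(U)\bigr)=g_j(\partial U)$ from inside $g_j(U)$, and in general $U\cap g_j(\partial U)\neq\emptyset$; near such \emph{interior} points of $U$ the term $\psi_j+c_j$ is unbounded above (while being read as $-\infty$ at the point itself), so $\Psi$ is unbounded on compact subsets of $U$ and no regularized maximum can repair this. You must first truncate each translate to a relatively compact piece $g_j(\{\varphi<c_0\})$ with $K\subset\{\varphi<c_0\}\Subset U$, and then arrange a hand-off so that $\varphi$ (or another good term) strictly dominates the truncated $\psi_j$ near $\partial\bigl(g_j(\{\varphi<c_0\})\bigr)\cap U$; that gluing is where the real work lies. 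Second, even after truncating, additive constants $c_j$ cannot in general achieve the domination you need over $K$: a translate $\psi_j$ may be good at $x\in K$ and bad at $y\in K$ while $\psi_{j'}$ behaves oppositely, and the two requirements $\psi_j(x)+c_j>\psi_{j'}(x)+c_{j'}$ and $\psi_{j'}(y)+c_{j'}>\psi_j(y)+c_j$ force $\psi_{j'}(x)-\psi_j(x)<c_j-c_{j'}<\psi_{j'}(y)-\psi_j(y)$, which has no solution for arbitrary values of the translates. One needs the extra freedom of convex increasing reparametrizations $\chi_j\circ\psi_j$ (which do preserve strong $q$-convexity with corners) together with a genuine localization of each translate to the region where it is good. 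So your proposal is a plausible outline of the right idea, but the patching step --- which is essentially the entire content of Peternell's theorem --- is missing.
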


It follows from Theorem \ref{D-F} and Theorem \ref{P} the following.
\begin{corollary}\label{Homoge}
If $X $ is a homogeneous compact complex manifold and
$U\varsubsetneq X$ is a open set in $X$ that is  $q$-convex with
corners then $U$ is $\tilde{q}$-complete
 with $\tilde{q}=n-\left[\frac{n}{q}\right]+1$.
\end{corollary}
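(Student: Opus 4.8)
The plan is to chain together the two theorems that immediately precede this corollary, so the proof is essentially a one-line composition. First I would invoke Peternell's theorem (Theorem \ref{P}): since $X$ is a homogeneous compact complex manifold and $U \varsubsetneq X$ is open and $q$-convex with corners, it follows that $U$ is $q$-complete with corners. This upgrades the convexity hypothesis to completeness while preserving the ``with corners'' feature, which is exactly the input required by the Diederich--Forn\ae ss smoothing result.

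Next I would apply the Diederich--Forn\ae ss theorem (Theorem \ref{D-F}) to the $q$-complete-with-corners manifold $U$. That theorem removes the corners at the cost of raising the convexity index: it asserts that any $q$-complete manifold with corners of dimension $n$ is $\tilde{q}$-complete with $\tilde{q} = n - \left[\frac{n}{q}\right] + 1$. Since $U$ is an open subset of the $n$-dimensional manifold $X$, we have $\dim U = n$, so the formula applies with this value of $n$ and yields precisely the claimed conclusion that $U$ is $\tilde{q}$-complete with $\tilde{q} = n - \left[\frac{n}{q}\right] + 1$.

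I do not anticipate a genuine obstacle here, since both ingredients are quoted as black boxes. The only point requiring mild care is the bookkeeping of dimensions and the ``with corners'' qualifier: one must check that Peternell's output ($q$-complete \emph{with corners}) matches the hypothesis of Diederich--Forn\ae ss ($q$-complete \emph{with corners}), and that the dimension $n$ appearing in the $\tilde{q}$ formula is the ambient dimension $\dim X = \dim U = n$ rather than some intrinsic dimension of a smaller stratum. Both match by construction, so the composition goes through and the corollary follows immediately.
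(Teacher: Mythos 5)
Your proposal is correct and matches the paper exactly: the corollary is stated there as an immediate consequence of Theorem \ref{P} (Peternell) followed by Theorem \ref{D-F} (Diederich--Forn\ae ss), which is precisely the composition you describe. Your extra care about the ``with corners'' qualifier and the dimension bookkeeping is sound but not elaborated in the paper, which gives no written proof beyond citing the two theorems.
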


\par To prove Theorem \ref{main_theorem}, we need prove the following.
\begin{theorem}\label{domain-corner}
Let $\fol$ be a holomorphic foliation,  of codimension $k\geq 1$, on a
homogeneous compact K\"ahler manifold $X$ of dimension $n$. Suppose
that $Sing_{k+1}(\fol)$  is contained in an absolutely
$k$-convex  open  $U\subset X$ and that  $$[n/k]\geq 2k+3.$$
Then, $j^*Res_{c_{1}^{k+1}}(\F, \sing_{k+1}(\fol))=0$.
\end{theorem}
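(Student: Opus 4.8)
The plan is to reduce the statement to a single cohomological vanishing for the domain $U$, namely $H^{2(n-k)-2}(U,\C)=0$, and then to deduce it from the Andreotti--Grauert vanishing theorem together with the Ohsawa--Takegoshi--Demailly decomposition. First I would recall that the residue $Res_{c_{1}^{k+1}}(\F,\sing_{k+1}(\fol))$ is, by its very construction, a linear functional on $H^{2(n-k)-2}(U_{0},\C)$ for any neighbourhood $U_{0}$ of $\sing_{k+1}(\fol)$, and that $j^{*}$ is induced by the duality $H^{2(n-k)-2}(U_{0},\C)^{*}\simeq H^{2k+2}(X,X\setminus\sing_{k+1}(\fol),\C)$. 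Since $\sing_{k+1}(\fol)\subset U$ and the residue depends only on the germ of $\fol$ along $\sing_{k+1}(\fol)$, excision $H^{2k+2}(X,X\setminus\sing_{k+1}(\fol),\C)\simeq H^{2k+2}(U,U\setminus\sing_{k+1}(\fol),\C)$ lets me evaluate everything inside $U$, i.e.\ take $U_{0}=U$. Consequently, if $H^{2(n-k)-2}(U,\C)=0$, then the functional $Res_{c_{1}^{k+1}}(\F,\sing_{k+1}(\fol))$ vanishes and hence $j^{*}Res_{c_{1}^{k+1}}(\F,\sing_{k+1}(\fol))=0$, which is exactly the assertion.

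Next I would produce the completeness needed to run the vanishing theorems. An absolutely $k$-convex domain is in particular strongly $k$-convex, hence trivially $k$-convex with corners; therefore Corollary \ref{Homoge} applies and shows that $U$ is $\tilde{k}$-complete with $\tilde{k}=n-[n/k]+1$. Applying the Andreotti--Grauert theorem (Theorem \ref{vanishing}) to the coherent sheaves $\Omega^{\ell}_{U}$ then yields $H^{s}(U,\Omega^{\ell})=0$ for every $s\geq\tilde{k}$ and every $\ell$.

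I would then invoke the Ohsawa--Takegoshi--Demailly decomposition (Theorem \ref{Ohsawa}) with $q=k$: for every total degree $m\geq n+k$ one has $H^{m}(U,\C)\simeq\bigoplus_{s+\ell=m}H^{s}(U,\Omega^{\ell})$, together with the conjugation symmetry $H^{s}(U,\Omega^{\ell})\simeq\overline{H^{\ell}(U,\Omega^{s})}$. Combining this symmetry with the Andreotti--Grauert vanishing, each summand $H^{s}(U,\Omega^{\ell})$ dies as soon as $s\geq\tilde{k}$ or $\ell\geq\tilde{k}$, so that the only summands which can survive satisfy $s,\ell\leq\tilde{k}-1=n-[n/k]$, whence $m=s+\ell\leq 2\bigl(n-[n/k]\bigr)$. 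Therefore $H^{m}(U,\C)=0$ whenever $m\geq n+k$ and $m>2\bigl(n-[n/k]\bigr)$. It remains to check that $m=2(n-k)-2$ lies in this range: the inequality $2(n-k)-2\geq n+k$ amounts to $n\geq 3k+2$, and $2(n-k)-2>2\bigl(n-[n/k]\bigr)$ amounts to $[n/k]>k+1$; both are comfortably implied by the hypothesis $[n/k]\geq 2k+3$, which already forces $n\geq k(2k+3)\geq 3k+2$. Hence $H^{2(n-k)-2}(U,\C)=0$, and the first paragraph gives $j^{*}Res_{c_{1}^{k+1}}(\F,\sing_{k+1}(\fol))=0$.

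The hard part will be the localization step of the first paragraph: one must justify rigorously that the residue, which is defined through a cofinal system of small neighbourhoods of $\sing_{k+1}(\fol)$ via Alexander--Lefschetz duality, may legitimately be read off on the single (possibly large) domain $U$, so that the vanishing of $H^{2(n-k)-2}(U,\C)$ really kills $j^{*}Res_{c_{1}^{k+1}}$. This rests on excision and on the compatibility of the duality isomorphism with restriction of neighbourhoods, and it is where the \emph{arbitrary supports} formulation of Theorem \ref{Ohsawa} is essential. A secondary point to handle carefully is the bookkeeping in the third paragraph: the conjugation symmetry of Theorem \ref{Ohsawa} is only guaranteed in the range $m\geq n+k$, so I must make sure it is applied term by term strictly inside that range when killing the Hodge-type summands.
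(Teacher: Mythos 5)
Your proposal is correct and follows essentially the same route as the paper: reduce to the vanishing of $H^{2(n-k)-2}(U,\mathbb{C})$, obtain $\tilde{k}$-completeness of $U$ from Corollary \ref{Homoge}, and kill each Hodge summand of the Ohsawa--Takegoshi--Demailly decomposition via Andreotti--Grauert together with the conjugation symmetry. Your version is in fact slightly more careful than the paper's, since it makes explicit both the excision/localization step identifying $U_{0}$ with $U$ and the arithmetic verification that $2(n-k)-2$ lies in the range where the decomposition and symmetry apply.
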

\begin{proof}
First of all, it follows from Corollary \ref{Homoge} that $U$ is
$\tilde{k}$-complete with $$\tilde{k}=n-\left[\frac{n}{k}\right]+1.$$
Since  $U\subset X$ is absolutely  $k$-convex  and
$$2(n-k)-2\geq n+n-[n/k] +1 \geq n+k$$
it follows from Ohsawa-Takegoshi-Demailly's Theorem that
$$
H^{2(n-k)-2}(U,\mathbb{C}) \simeq \bigoplus_{s+\ell=2(n-k)-2}
H^{s}(U, \Omega^{\ell}), \ \ H^{s}(U, \Omega^{\ell})\simeq
\overline{H^{\ell}(U, \Omega^{s}) }.
$$
On the other hand,  the condition $2(n-k)-2\geq 2n-[n/k] +1$ implies
that is either $s\geq n-[n/k] +1$ or $\ell\geq n-[n/k] +1$. In fact,
suppose that $s< n-[n/k] +1$ and  $\ell < n-[n/k] +1$. Then
$$2(n-k)-2=s+\ell <  2n-2[n/k] +2<2n-[n/k] +1,$$ absurd since
$2(n-k)-2\geq 2n-[n/k] +1$.

Now, if  $s\geq \tilde{k}=n-[n/k] +1$ we have
$$
 H^{s}(U, \Omega^{\ell})=0
$$
by Andreotti-Grauert's vanishing Theorem, since $U$ is
$\tilde{k}$-complete. Otherwise, if $s< n-[n/k] +1$,  then
$\ell\geq n-[n/k] +1$ and by Andreotti-Grauert's vanishing Theorem $
H^{\ell}(U, \Omega^{s})=0$. but, by 
 Ohsawa-Takegoshi-Demailly's Theorem we have
$$
H^{s}(U, \Omega^{\ell})\simeq \overline{H^{\ell}(U, \Omega^{s}) }=0.
$$
Therefore, $H^{2(n-k)-2}(U,\mathbb{C}) =0$. That is,
$H^{2(n-k)-2}(U,\mathbb{C})^* =0$. In particular
$j^*Res_{c_{1}^{n-q+1}}(\F, \sing_{q-1}(\fol))=0.$
\end{proof}

\section{Proof of Theorem \ref{main_theorem}}
Let  $\mathcal F$ be  a singular holomorphic foliation of
codimension $k\geq 1$ and suppose by contradiction that $\nn=\det(N_{\fol})$ is ample.
Now, by Baum-Bott formula (Theorem \ref{BB}), we
have
\begin{equation}\label{formula}
c^{k+1}_{1}(\mathcal{N})=\sum_{Z}BB(\fol,Z)[Z]=j^*Res_{c_{1}^{k+1}}(\F,
\sing_{k+1}(\fol)),
\end{equation}
where the sum is done over all irreducible components of
$\sing_{k+1}(\fol)$.  Because $\nn$ is ample, the class
$c^{k+1}_{1}(\mathcal{N})$ is not zero, and by Theorem
\ref{singu}, we infer that $\sing(\fol)$ always has irreducible
components of codimension $k+1$ and so this sum is not zero.
But the hypotheses over $\singf$ implies that $\sing_{k+1}(\fol)\subset U$, where $U$ is an absolutely
$k$-convex domain. 
Applying Theorem \ref{domain-corner}, we must have
$j^*Res_{c_{1}^{k+1}}(\F, \sing_{k+1}(\fol))=0$. It is a
contradiction with (\ref{formula}).
\vspace{1cm}

\noindent {\bf Acknowledgements.}  We would like to thank Marcio G.
Soares for his comments and suggestions.


\begin{thebibliography}{99}
\bibitem{AG}
A. Andreotti, H. Grauert: \emph{Th\'eor\`emes de finitude pour la cohomologie des espaces complexes.} Bull. Soc. Math. Fr. 90 (1962), 193-259.
\bibitem{baum}
P. Baum, R. Bott: \emph{Singularities of holomorphic foliations.} J.
Differential Geom. 7 $(1972)$, 279-432.


\bibitem{brasselet}
J-P. Brasselet, J. Seade, and T. Suwa: Vector fields on Singular Varieties. \emph{Series Lectures Notes in Mathematics, Vol 1987 Springer 2010.} 
\bibitem{brunella}
M. Brunella:\emph{ On the dynamics of codimension one holomorphic
foliations with ample normal bundle.} Indiana Univ. Math. J. 57 (7)
(2008), 3101-3113.
\bibitem{tori}
M. Brunella:\emph{ Codimension one foliations on complex tori.} Ann.
Fac. Sci. Toulouse Math. (6) 19 (2) (2010), 405-418.
\bibitem{perrone} M. Brunella, C. Perrone: \emph{Exceptional singularities of codimension one holomorphic foliations.} Publicacions Matem\`atiques 55 (2011), 295-312.

\bibitem{clp}
  C. Camacho, A. Lins Neto, and P. Sad: \emph{Minimal sets of foliations on complex projective spaces.} Publications Math\'ematiques de $l'IHES$ 68 (1988), 187-203.
\bibitem{CS}
C. Camacho and P. Sad:\emph{ Invariant varieties through singularities of
vector fields.} Ann. of Math. 115 (1982), 579-595.

\bibitem{dema}
J. Demailly: \emph{Complex Analytic and Differential Geometry.}
http://www-fourier.ujf-grenoble.fr/$\sim$demailly/manuscripts/agbook.pdf.

\bibitem{De}
J. P. Demailly, \emph{ Cohomology of q-convex spaces in top degrees}, Math. Z. 204 (1990), 283-295.

\bibitem{DF}
K. Diederich, J.E. Forn\ae ss: \emph{ Smoothing $q$-convex functions and vanishing theorems.} Invent. Math. 82 (1985), 291-305.
\bibitem{EK}
E. Esteves and S. Kleiman: \emph{Bounding solutions of Pfaff
equations.} Comm. Algebra 31 (2003), 3771-3793.
\bibitem{FSE}
J.E. Forn\ae ss, N. Sibony, E.F. Wold:\emph{ $Q$-Complete domains
with corners in $\mathbb{P}^n$ and extension of line bundles.} Math.
Z. 273 (2013), 589-604.
\bibitem{J}
J. P. Jouanolou: \emph{Equations de Pfaff alg\'ebriques.} Lecture
Notes in Mathematics, vol. 708 (1979), Springer.

\bibitem{Ln}
A. Lins Neto: \emph{A note on projective Levi flats and minimal sets
of algebraic foliations}. Ann. Inst. Fourier (Grenoble) 49(4)
(1999), 1369-1385.
\bibitem{JV}
F. Lo Bianco, J.V. Pereira:\emph{Smooth foliations on homogeneous compact K\''ahler manifolds}. To appear in Annales de la Facult\'e des Sciences de Toulouse, 2015. 
\bibitem{P}
M. Peternell: \emph{Continuous $q$-convex exhaustion functions}.
Invent. Math. 85 (1986), 249-262.

\bibitem{Oh}
T. Ohsawa.  \emph{ A reduction theorem for cohomology groups of very strongly $q$-convex Kahler manifolds}, Invent. Math., 63 (1981), 335-354 and 66 (1982), 391-393.

\bibitem{ro}
W. Rothstein:\emph{ Zur Theorie der Analytischen Mannigfaltigkeiten
im Raume von n komplexen Ver\"anderlichen.} Math. Ann. 129 (1955),
96-138.
\bibitem{rham}
K. Saito: \emph{On a generalization of De-Rham lemma}. Ann. Inst. Fourier (Grenoble) 26 (1976), no. 2, vii, 165-170.
\bibitem{suwa}
T. Suwa: \emph{Indices of vector fields and residues of singular
holomorphic foliations.} Actualit\'es Math\'ematiques, Hermann
(1998).
\end{thebibliography}
\end{document}